\documentclass[reqno, 11pt]{amsart}

\usepackage[text={155mm,235mm},centering]{geometry}
\geometry{a4paper}
\input diagxy
\input xy

\usepackage[active]{srcltx} 

\linespread{1.2}

\vfuzz2pt

\newtheorem{thm}{Theorem}[section]
\newtheorem{cor}[thm]{Corollary}
\newtheorem{lem}[thm]{Lemma}
\newtheorem{prop}[thm]{Proposition}

\theoremstyle{definition}

\theoremstyle{property}

\theoremstyle{remark}
\newtheorem{rem}[thm]{Remark}

\numberwithin{equation}{section}
\usepackage[mathscr]{eucal}
\usepackage[all]{xy}
\usepackage{mathrsfs}
\usepackage{xypic}
\usepackage{amsfonts}
\usepackage{amsmath}
\usepackage{amsthm,bm}
\usepackage{amssymb,tikz-cd}
\usepackage{latexsym}
\usepackage{tabularx}
\usepackage{graphicx}
\usepackage{pict2e}
\usepackage[pagebackref]{hyperref}
\usepackage{tikz}
\usepackage{textcomp}
\usepackage[scr=rsfs]{mathalfa}
\definecolor{ceruleanblue}{rgb}{0.16, 0.32, 0.75}
\hypersetup{colorlinks=true,allcolors=ceruleanblue}
\setcounter{tocdepth}{1}

\allowdisplaybreaks

\begin{document}

\title[Holomorphic Koszul-Brylinski homology via Dolbeault cohomology]{Holomorphic Koszul-Brylinski homology via Dolbeault cohomology}

\author{Lingxu Meng}
\address{School of Mathematics, North University of China, Taiyuan, Shanxi 030051,  P. R. China}
\email{menglingxu@nuc.edu.cn}%

\subjclass[2010]{Primary 53D17; Secondary 32C35, 32Q99}
\keywords{Holomorphic Poisson manifold; Koszul-Brylinski homology; Dolbeault cohomology; Mayer-Vietoris sequence; K\"{u}nneth theorem;  blow-up formula;  Leray-Hirsch theorem}


\begin{abstract}
  We  use the Dolbeault cohomology  to investigate the Koszul-Brylinski homology  on  holomorphic Poisson manifolds.
  We obtain the Leray-Hirsch theorem for Hochschild homology and the Mayer-Vietoris sequence,  K\"{u}nneth theorem for holomorphic Koszul-Brylinski homology.
  In particular, we show some relations of  holomorphic Koszul-Brylinski homologies around  a blow-up transformation for the general case (\emph{not necessarily compact}) by our previous works on the Dolbeault cohomology.
\end{abstract}

\maketitle

\section{Introduction}
Holomorphic Poisson structures are a special class of Poisson structures, which naturally appear  in various fields \cite{EL1,EL2,Hue,I,KS}.
They have a close relationship with  generalized complex geometry.
In particular, the local model of  generalized complex manifolds is the product of a holomorphic Poisson manifold and a symplectic manifold  \cite{Bai}.
We refer the readers to \cite{Go,Gu,H1,H3,L-GSX,Py} and references therein for more results on  holomorphic Poisson structures.

For  a holomorphic Poisson manifold $(X,\pi)$,
the Lichnerowicz-Poisson cohomology $H^k(X,\pi)$ and the Koszul-Brylinski homology $H_k(X,\pi)$  are two kinds of important invariants.
There are fruitful works on the former one \cite{CFP,CGP,Hong,HX,Ma,PS1,PS2}, etc., but few results on the later one  until now.
M. Sti\'{e}non \cite{S} used Lie algebroids to study holomorphic Koszul-Brylinski homology.
In particular, he proved that Evens-Lu-Weinstein pairing on  holomorphic Koszul-Brylinski homology is nondegenerate \cite[Theorem 4.4]{S}
 and  $H_k(X,\pi)\cong H^{n-k}(X,\pi)$ for the unimodular case \cite[Proposition 4.7]{S}.
He also obtained that the Euler characteristic of holomorphic Koszul-Brylinski homology   coincides with the signed Euler characteristic in the usual sense.
X. Chen, Y. Chen, S. Yang and X. Yang  gave a blow-up formula of  Koszul-Brylinski homology on compact holomorphic Poisson manifolds \cite[Theorem 1.1]{CCYY}
and computed the Koszul-Brylinski homology of del Pezzo surfaces and some complex parallelizable manifolds \cite[Section 6]{CCYY}.

Hochschild homology is a significant invariant of complex manifolds, which is widely studied in  noncommutative geometry and $K$-theory.
By the Hochschild-Kostant-Rosenberg theorem,
Hochschild homology is isomorphic to  the holomorphic Koszul-Brylinski homology for  the trivial holomorphic Poisson structure \cite{S,CCYY}.
Hence, the Hochschild homology can be investigated through the approach of holomorphic Koszul-Brylinski homology.

The present paper aims to generalize several classical theorems in topology, such as Leray-Hirsch theorem, Mayer-Vietoris sequence, K\"{u}nneth theorem,  to  holomorphic Koszul-Brylinski homology.
In particular, we show some relations   of  Koszul-Brylinski homologies under the blow-up transformation of general (\emph{not necessarily compact})  holomorphic Poisson manifolds   by the self-intersection and the explicit expression of the blow-up formula of  Dolbeault cohomology in our previous works \cite{M1}.


\section{Preliminaries}
\subsection{Notations}
For a double complex $K^{\bullet,\bullet}$ (resp. a complex $K^\bullet$), $K^{\bullet,\bullet}[m,n]$ (resp.  $K^\bullet[m]$) means the shifted double complex (resp. shifted complex) by bidegree $(m,n)$ (resp.  degree $m$), where $m$, $n\in\mathbb{Z}$.
Denote by $sK^{\bullet,\bullet}$ the (simple) complex associated to the double complex $K^{\bullet,\bullet}$
and by $ss(K^{\bullet,\bullet}\otimes L^{\bullet,\bullet})$ the double complex associated to the tensor of double complexes $K^{\bullet,\bullet}$ and $L^{\bullet,\bullet}$.
See \cite[Sections 2.1, 2.2, 2.4]{M2} for more details.

For a complex manifold $X$, denote by  $H^{p,q}(X)$ the Dolbeault cohomology of $X$ and by $\mathcal{A}_X^{k}$ (resp.  $\mathcal{D}_X^{k}$, $\mathcal{A}_X^{p,q}$, $\mathcal{D}_X^{p,q}$, $\mathcal{O}_X$, $\Omega_X^p$) the sheaf of germs of complex-valued smooth $k$-forms (resp.  complex-valued $k$-currents, smooth $(p,q)$-forms, $(p,q)$-currents, holomorphic functions, holomorphic $p$-forms) on $X$ for any $p$, $q$, $k\in\mathbb{Z}$.
Notice that, if $\textrm{dim}_{\mathbb{C}}X=n$, we tacitly approve that $H^{p,q}(X)=0$, $\mathcal{A}_X^{p,q}=0$, $\mathcal{D}_X^{p,q}=0$ for $p<0$ or $>n$, or $q<0$ or $>n$.

For complex vector spaces $U_1$, $\cdots$, $U_n$,
the vector in $\bigoplus_{i=1}^nU_i$ is written as $(u_1,\,\cdots,\,u_n)^T$, where $u_i\in U_i$ for $1\leq i\leq n$ and $(\bullet)^T$ means the transposition.
For  complex linear maps $f_{ji}:U_i\rightarrow V_j$,  $1\leq i\leq n$, $1\leq j\leq m$,
define
\begin{displaymath}
\left(
  \begin{array}{ccc}
   f_{11}  &  \cdots &   f_{1n}       \\
   \cdots   & \cdots &     \cdots     \\
   f_{m1}  &  \cdots &   f_{mn}       \\
  \end{array}
\right)
:\bigoplus_{i=1}^nU_i\rightarrow\bigoplus_{j=1}^mV_i
\end{displaymath}
as
\begin{displaymath}
(u_1,\,\cdots,\,u_n)^T\mapsto \left(\sum_{i=1}^nf_{1i}(u_i),\,\cdots,\,\sum_{i=1}^nf_{mi}(u_i)\right)^T.
\end{displaymath}
In particular, $(f,g):U\oplus V\rightarrow W$ means $(u,v)^T\mapsto f(u)+g(v)$ for $f:U\rightarrow W$ and $g:V\rightarrow W$
and $(f,g)^T:W\rightarrow U\oplus V$ means $w\mapsto (f(w),\,g(w))^T$ for  $f:W\rightarrow U$ and $g:W\rightarrow V$.

\subsection{Holomorphic Poisson manifolds}
Suppose that $X$ is a complex manifold and $\mathcal{O}_X(U)$ is endowed with a Poisson bracket $\{-,-\}$ for any open subset $U\subseteq X$ satisfying that the restriction map $\mathcal{O}_X(U)\rightarrow \mathcal{O}_X(V)$ is a morphism of Poisson algebras for open sets $V\subseteq U$.
Then $(X,\{-,-\})$ is said to be a \emph{holomorphic Poisson manifold}.
In this case, there exists a unique holomorphic bivector field $\pi\in H^0(X,\bigwedge^2T_X)$ such that  $\pi(df\wedge dg)=\{f,g\}$ for any $f$, $g\in \mathcal{O}_X(U)$, where $T_X$ is the holomorphic tangent bundle of $X$.
We also write $(X,\pi)$ for this holomorphic Poisson manifold and say that $\pi$ is a \emph{holomorphic Poisson structure} on $X$.

A holomorphic map $\rho:X\rightarrow Y$ between holomorphic Poisson manifolds $(X,\{-,-\})$ and $(Y,\{-,-\})$ is called a \emph{morphism} if the pullback $\rho^*:\mathcal{O}_Y(U)\rightarrow \mathcal{O}_X(\rho^{-1}(U))$ is a morphism of Poisson algebras for each open set $U\subseteq Y$.
A holomorphic map $\rho:X\rightarrow Y$ between holomorphic Poisson manifolds $(X,\pi)$ and $(Y,\sigma)$ is a  morphism
if and only if $\rho_*(\pi_x)=\sigma_{\rho(x)}$ for all $x\in X$.

Let $(X,\pi)$, $(Y,\sigma)$ be holomorphic Poisson manifolds and let $Y$ be also a closed complex submanifold of $X$.
We say $(Y,\sigma)$ is a \emph{closed holomorphic Poisson submanifold} of $(X,\pi)$, if the natural inclusion $i:(Y,\sigma)\rightarrow(X,\pi)$ is a morphism,
i.e., $i_*(\sigma_y)=\pi_{y}$ for all $y\in Y$.
Evidently, there exists at most one holomorphic Poisson structure $\sigma$ on $Y$ such that $(Y,\sigma)$ is a closed holomorphic Poisson submanifold of $(X,\pi)$.
In such case,  denote $\sigma$ by $\pi|_Y$.


Let $(X,\pi)$ be a holomorphic Poisson manifold.
Denote by $l_{\pi}:\mathcal{A}^k(X)\rightarrow \mathcal{A}^{k-2}(X)$ the contraction by $\pi$ and set $\partial_{\pi}:=l_{\pi}\circ\partial-\partial\circ l_{\pi}:\mathcal{A}_X^k\rightarrow \mathcal{A}_X^{k-1}$ for any $k\in\mathbb{Z}$.
Then $\partial_{\pi}(\Omega_X^{p})\subseteq \Omega_X^{p-1}$, $\partial_{\pi}(\mathcal{A}_X^{p,q})\subseteq \mathcal{A}_X^{p-1,q}$, $\partial_{\pi}^2=0$ and $\bar{\partial}\partial_{\pi}+\partial_{\pi}\bar{\partial}=0$.
If $\rho:(X,\pi)\rightarrow (Y,\sigma)$ is a morphism,
then  $\partial_\pi\rho^*=\rho^*\partial_\sigma$.

\subsection{Holomorphic Koszul-Brylinski homology}
In this subsection, we collect some element knowledge on holomorphic Koszul-Brylinski homology; see also \cite[Section 4]{S} and \cite[Section 2.2, 3.1]{CCYY}.
Let $(X,\pi)$ be a holomorphic Poisson manifold with complex dimension $n$.
\subsubsection{Definition}
The holomorphic Koszul-Brylinski complex $\mathcal{M}_X^\bullet(\pi)$ of $X$ is the complex of sheaves
\begin{displaymath}
\xymatrix{
0\ar[r] &\Omega_X^n\ar[r]^{\partial_\pi\,} &\Omega_X^{n-1}\ar[r]^{\,\,\partial_\pi}&\cdots\ar[r]^{\partial_\pi\,}&\Omega_X^1\ar[r]^{\,\partial_\pi}&\mathcal{O}_X\ar[r]&0.
}
\end{displaymath}
More precisely, $\mathcal{M}_X^k(\pi)=\Omega_X^{n-k}$, $d^k=\partial_\pi$.
The $k$-th  \emph{Koszul-Brylinski homology} of $(X,\pi)$ is defined as $H_k(X,\pi):=\mathbb{H}^k(X,\mathcal{M}_X^\bullet(\pi))$.

\subsubsection{Computation via smooth forms}
Set $\mathcal{K}_X^{p,q}(\pi):=\mathcal{A}_X^{-p,q}$, $d^{p,q}_1:=\partial_\pi$,  $d^{p,q}_2:=\bar{\partial}$.
Then $(\mathcal{K}_X^{\bullet,\bullet}(\pi),d_1,d_2)$ is a double complex, shortly denoted  by $\mathcal{K}^{\bullet,\bullet}_{X}(\pi)$.
Let $\mathcal{K}^{\bullet}_{X}(\pi):=s\mathcal{K}^{\bullet,\bullet}_{X}(\pi)$  be the complex associated to $\mathcal{K}^{\bullet,\bullet}_{X}(\pi)$.
Set $K^{p,q}(X,\pi):=\Gamma(X,\mathcal{K}^{p,q}_{X}(\pi))$ and $K^{p}(X,\pi):=\Gamma(X,\mathcal{K}^{p}_{X}(\pi))$.
For any $p\in\mathbb{Z}$, $\mathcal{M}_X^{p+n}(\pi)\rightarrow (\mathcal{K}^{p,\bullet}_{X}(\pi),d^{p,\bullet}_2)$ given by the inclusion is a resolution of $\mathcal{M}_X^{p+n}(\pi)$.
By \cite[Lemma 8.5]{V}, the inclusion gives a quasi-isomorphism $\mathcal{M}_X^{\bullet}(\pi)[n]\rightarrow \mathcal{K}^{\bullet}_{X}(\pi)$ of complexes of sheaves.
For any $p\in\mathbb{Z}$, $\mathcal{K}^{p}_{X}(\pi)$ is a soft sheaf, so it is $\Gamma$-acyclic.
By \cite[Proposition 8.12]{V},
\begin{equation}\label{computation1}
H_k(X,\pi)\cong H^{k-n}(K^{\bullet}(X,\pi))
\end{equation}
for any $k\in\mathbb{Z}$.
Associated to $K^{\bullet,\bullet}(X,\pi)$, there is a spectral sequence $_KE_r^{p,q}(X,\pi)\Rightarrow H_{p+q+n}(X,\pi)$,
where
\begin{equation}\label{ss1}
_KE_1^{p,q}(X,\pi)=H^q(K^{p,\bullet}(X,\pi))=H^{-p,q}(X).
\end{equation}

\subsubsection{Computation via currents}
For $T\in \mathcal{D}_X^{k}$, $\alpha\in \mathcal{A}_X^{2n-k+1}$, $\beta\in \mathcal{A}_X^{2n-k-1}$,
set $(\partial_{\pi}T)(\alpha):=(-1)^{k-1}T(\partial_{\pi}\alpha)$ and $(\bar{\partial} T)(\beta):=(-1)^{k+1}T(\bar{\partial}\beta)$.
Clearly, $\partial_{\pi}(\mathcal{D}_X^{p,q})\subseteq \mathcal{D}_X^{p-1,q}$, $\partial_{\pi}^2=0$ and $\bar{\partial}\partial_{\pi}+\partial_{\pi}\bar{\partial}=0$.
Let $\rho:(X,\pi)\rightarrow (Y,\sigma)$ be a  morphism of holomorphic Poisson manifolds and denote  by $\rho_*$ the pushforward of currents.
Then  $\partial_\sigma\rho_*=\rho_*\partial_\pi$.

As those of $\mathcal{K}(\pi)$, we can define $\mathcal{P}^{\bullet,\bullet}_{X}(\pi)$, $\mathcal{P}^{\bullet}_{X}(\pi)$, $P^{\bullet,\bullet}(X,\pi)$, $P^{\bullet}(X,\pi)$ and $_PE_1^{p,q}(X,\pi)$,
where  $\mathcal{P}_X^{p,q}(\pi):=\mathcal{D}_X^{-p,q}$, $d^{p,q}_1:=\partial_\pi$,  $d^{p,q}_2:=\bar{\partial}$.
Similarly, the inclusion gives a quasi-isomorphism $\mathcal{M}_X^{\bullet}(\pi)[n]\rightarrow \mathcal{P}^{\bullet}_{X}(\pi)$ of complexes of sheaves.

The inclusion naturally gives the morphism $K^{\bullet,\bullet}(X,\pi)\hookrightarrow P^{\bullet,\bullet}(X,\pi)$ of double complexes.
It induces isomorphisms $_KE_1^{p,q}(X,\pi)=H^q(\mathcal{A}^{-p,\bullet}(X))\tilde{\rightarrow} _PE_1^{p,q}(X,\pi)=H^q(\mathcal{D}^{-p,\bullet}(X))$ for all $p$, $q\in \mathbb{Z}$,
hence induces  isomorphisms $H^k(K^\bullet(X,\pi))\rightarrow H^k(P^\bullet(X,\pi))$ for all $k\in \mathbb{Z}$.
Both $H^k(K^\bullet(X,\pi))$ and $H^k(P^\bullet(X,\pi))$ will be written as $H_k(X,\pi)$.

\subsubsection{Pullback and Pushforward}
Let $\rho:(X,\pi)\rightarrow (Y,\sigma)$ be a   morphism of holomorphic Poisson manifolds and set $r=\textrm{dim}_{\mathbb{C}}X-\textrm{dim}_{\mathbb{C}}Y$.
Then  $\rho$ induces the \emph{pullback}
$\rho^*:K^{\bullet,\bullet}(Y,\sigma)\rightarrow K^{\bullet,\bullet}(X,\pi)$, hence induces
$\rho^*:H_{k}(Y,\sigma)\rightarrow H_{k+r}(X,\pi)$.
In addition, if $\rho$ is proper, it induces the \emph{pushforward} $\rho_*:P^{\bullet,\bullet}(X,\pi)\rightarrow P^{\bullet,\bullet}(Y,\sigma)[r,-r]$,
hence induces  $\rho_*:H_{k}(X,\pi)\rightarrow H_{k-r}(Y,\sigma)$.

\section{Hochschild homology}
Assume that $X$ is a complex manifold with complex dimension $n$.
Denote by $\Delta: X \rightarrow X \times X$ the diagonal embedding.
Its image is  a complex submanifold of $X\times X$ isomorphic to $X$, still denoted by $\Delta$.
The Hochschild homology of $X$  is defined as
$\textrm{HH}_k(X):= \textrm{Tor}_k^{X\times X} (\mathcal{O}_{\Delta}, \Delta_{*}\mathcal{O}_X)$ for each $k\in \mathbb{Z}$. 
By the Hochschild-Kostant-Rosenberg theorem \cite[Corollary 3.1.4]{BF}, 
\begin{equation}\label{HKR}
\textrm{HH}_{k}(X)\cong\bigoplus\limits_{p-q=k}H^{q}(X,\Omega^p_X).
\end{equation}
Set  $\pi=0$.
In such case, $(X,\pi)$ is a holomorphic Poisson manifold and $\partial_\pi=0$.
Moreover,
\begin{equation}\label{zero1}
\mathcal{M}_X^\bullet(0)=\bigoplus\limits_{i=0}^n\Omega_X^{n-i}[-i],
\end{equation}
where $\Omega_X^i$ means the complex of sheaves concentrated on $0$-th term with the sheaf $\Omega_X^i$.
By (\ref{zero1}), we have
\begin{equation}\label{zero2}
H_k(X,0)=\bigoplus\limits_{p-q=n-k}H^{q}(X,\Omega^p_X).
\end{equation}
Combining (\ref{HKR}) and (\ref{zero2}), we have
\begin{equation}\label{zero3}
H_k(X,0)\cong\textrm{HH}_{n-k}(X).
\end{equation}
See also \cite[Remark 4.2]{S}\cite[p.17]{CCYY} for other discussions.
Hence, we can study the Hochschild homology via the holomorphic Koszul-Brylinski homology.

\begin{thm}\label{L-H}
Let $\rho:E\rightarrow X$ be a holomorphic fiber bundle  over a complex manifold $X$.
Assume that there exist  $d$-closed  forms $t_1$, $\ldots$, $t_r$ of pure degrees on $E$ such that the restrictions  of their Dolbeault  classes $[t_1]_{\bar{\partial}}$, $\ldots$, $[t_r]_{\bar{\partial}}$ to $E_x$  is a basis of $H^{\bullet,\bullet}(E_x)=\bigoplus_{p,q\geq 0}H^{p,q}(E_x)$ for every $x\in X$.
Then there exists an isomorphism
\begin{displaymath}
\bigoplus\limits_{i=1}^r\emph{HH}_{k+v_i-u_i}(X) \tilde{\rightarrow} \emph{HH}_{k}(E)
\end{displaymath}
for any $k\in\mathbb{Z}$, where $(u_i,v_i)$ is the bidegree of $t_i$ for $1\leq i\leq r$.
\end{thm}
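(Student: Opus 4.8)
The plan is to reduce the theorem to the Leray--Hirsch theorem for Dolbeault cohomology; the bridge is the Hochschild--Kostant--Rosenberg isomorphism \eqref{HKR}, and what remains afterwards is bookkeeping with bidegrees. Recall from \eqref{HKR} that for any complex manifold $Z$ one has $\mathrm{HH}_k(Z)\cong\bigoplus_{p-q=k}H^{p,q}(Z)$, and, equivalently (combine \eqref{zero3} with \eqref{computation1}), $\mathrm{HH}_k(Z)\cong H^{-k}\big(K^\bullet(Z,0)\big)$, where $K^{p,q}(Z,0)=\mathcal{A}^{-p,q}(Z)$ and, since $\partial_\pi=0$ for $\pi=0$, the only differential present on $K^\bullet(Z,0)$ is $\bar\partial$. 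So the statement is really about the Dolbeault cohomologies of $E$ and $X$.

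The relevant map is the evident one. If $\alpha$ is a $\bar\partial$-closed $(a,b)$-form on $X$ and $t_i$ has pure bidegree $(u_i,v_i)$, then $\rho^*\alpha\wedge t_i$ is a $\bar\partial$-closed $(a+u_i,b+v_i)$-form on $E$ --- here one uses that $t_i$ is $d$-closed, hence in particular $\bar\partial$-closed --- and its Dolbeault class depends only on $[\alpha]_{\bar\partial}$. This produces, for all $a$, $b$,
\begin{displaymath}
L:\ \bigoplus_{i=1}^r H^{a-u_i,\,b-v_i}(X)\ \longrightarrow\ H^{a,b}(E),\qquad (\alpha_i)_{1\le i\le r}\ \longmapsto\ \sum_{i=1}^r\rho^*\alpha_i\wedge t_i ,
\end{displaymath}
and, wedging with the $t_i$ at the level of the double complexes $K^{\bullet,\bullet}(\cdot,0)$ and transporting through the identifications above, a homomorphism $\bigoplus_{i=1}^r\mathrm{HH}_{k+v_i-u_i}(X)\to\mathrm{HH}_k(E)$; after the reindexing in the last paragraph this is precisely the asserted isomorphism.

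The crux is that $L$ is an isomorphism in every bidegree --- this is exactly the Leray--Hirsch theorem for Dolbeault cohomology, and it is where the genuine work sits. I would cite it from the author's \cite{M1}. If one prefers to argue in place, the standard route is a five-lemma induction over a trivializing cover of $X$: for $U\subseteq X$ a polydisc one has $\rho^{-1}(U)\cong U\times E_x$, and for such a product $L$ is an isomorphism by the K\"unneth theorem for Dolbeault cohomology together with the hypothesis that $[t_1]|_{E_x},\dots,[t_r]|_{E_x}$ is a basis of $H^{\bullet,\bullet}(E_x)$; the inductive step uses the compatibility of $L$ (a map of $H^{\bullet,\bullet}(X)$-modules) with the Mayer--Vietoris sequences, and the passage to possibly infinite covers requires the good behaviour of those sequences in the non-compact setting. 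This last point is the \emph{main obstacle}, and it is precisely what \cite{M1} supplies; I would not reprove it.

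Finally, the bookkeeping. Granting that $L$ is an isomorphism, apply \eqref{HKR} to $E$ and then $L$ bidegree by bidegree:
\begin{displaymath}
\mathrm{HH}_k(E)\ \cong\ \bigoplus_{a-b=k}H^{a,b}(E)\ \cong\ \bigoplus_{a-b=k}\ \bigoplus_{i=1}^r H^{a-u_i,\,b-v_i}(X)\ =\ \bigoplus_{i=1}^r\ \bigoplus_{a-b=k}H^{a-u_i,\,b-v_i}(X).
\end{displaymath}
For fixed $i$, the substitution $p=a-u_i$, $q=b-v_i$ turns the constraint $a-b=k$ into $p-q=k+v_i-u_i$, so the $i$-th inner sum equals $\bigoplus_{p-q=k+v_i-u_i}H^{p,q}(X)\cong\mathrm{HH}_{k+v_i-u_i}(X)$ by \eqref{HKR} for $X$; summing over $i$ gives the claim, and unwinding the identifications shows the resulting isomorphism is induced by $L$, i.e. by $(\alpha_i)_i\mapsto\sum_i\rho^*\alpha_i\wedge t_i$. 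Equivalently and more invariantly: $\partial_\pi=0$ makes the spectral sequence ${}_KE_r(\cdot,0)$ of \eqref{ss1} degenerate at $E_1$, and on the $E_1$-pages ${}_KE_1^{p,q}=H^{-p,q}$ the chain map above is the Leray--Hirsch map, hence an isomorphism; since the double complexes $K^{\bullet,\bullet}(\cdot,0)$ are bounded, comparison of the two convergent spectral sequences gives the isomorphism on the abutments, which by \eqref{zero3} are the Hochschild homologies.
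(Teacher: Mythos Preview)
Your proposal is correct and follows essentially the same approach as the paper: both reduce to the Leray--Hirsch theorem for Dolbeault cohomology via the morphism $(\alpha_i)\mapsto\sum_i\rho^*\alpha_i\wedge t_i$ between the double complexes $K^{\bullet,\bullet}(\cdot,0)$, and then transport the resulting isomorphism to Hochschild homology through \eqref{computation1} and \eqref{zero3}. The only cosmetic differences are that the paper packages the passage to the abutment purely as a spectral-sequence comparison (your final paragraph) rather than your direct HKR bookkeeping, and cites \cite[Theorem~4.2]{M3} for the Dolbeault Leray--Hirsch input where you point to \cite{M1}.
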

\begin{proof}
Set $n=\textrm{dim}_{\mathbb{C}}X$ and $m=\textrm{dim}_{\mathbb{C}}E-\textrm{dim}_{\mathbb{C}}X$.
Set
\begin{displaymath}
S^{\bullet,\bullet}:=\bigoplus\limits_{i=1}^{r}K^{\bullet,\bullet}(X,0)[u_i,-v_i] \quad\mbox{ and }\quad T^{\bullet,\bullet}:=K^{\bullet,\bullet}(E,0).
\end{displaymath}
By (\ref{ss1}), we get the first pages
\begin{displaymath}
_{S}E_1^{p,q}=\bigoplus\limits_{i=1}^{r}H^{-(p+u_i),q-v_i}(X) \quad\mbox{ and }\quad _{T}E_1^{p,q}=H^{-p,q}(E)
\end{displaymath}
of the spectral sequences associated to $S^{\bullet,\bullet}$ and $T^{\bullet,\bullet}$ respectively.
By \cite[Theorem 4.2]{M3}, the morphism $(\rho^*(\bullet)\wedge t_1,\,\cdots,\,\rho^*(\bullet)\wedge t_r):S^{\bullet,\bullet}\rightarrow T^{\bullet,\bullet}$
of double complexes induces an isomorphism  $_SE_1^{\bullet,\bullet}\rightarrow\mbox{ }  _TE_1^{\bullet,\bullet}$  at $E_1$-pages, hence induces an isomorphism $H^{-k}(sS^{\bullet,\bullet})\rightarrow H^{-k}(sT^{\bullet,\bullet})$ for any $k\in\mathbb{Z}$.
Notice that $sS^{\bullet,\bullet}= \bigoplus_{i=1}^{r}K^{\bullet}(X,0)[u_i-v_i]$ and $sT^{\bullet,\bullet}=K^{\bullet}(E,0)$.
By (\ref{computation1}) and (\ref{zero3}),
\begin{displaymath}
H^{-k}(sS^{\bullet,\bullet})\cong\bigoplus_{i=1}^{r}H_{-k+u_i-v_i+n}(X,0)\cong \bigoplus_{i=1}^{r}\textrm{HH}_{k+v_i-u_i}(X)
\end{displaymath}
and
\begin{displaymath}
H^{-k}(sT^{\bullet,\bullet})\cong H_{-k+n+m}(E,0)\cong\textrm{HH}_{k}(E).
\end{displaymath}
We complete the proof.
\end{proof}

\begin{cor}\label{flagbun}
Let $E$ be the flag bundle associated to a holomorphic vector bundle over a complex manifold $X$.
Denote by $F$ the general fiber of $E$ over $X$.
Then there exists an isomorphism
\begin{displaymath}
\emph{HH}_{k}(X)^{\oplus b(F)} \tilde{\rightarrow} \emph{HH}_{k}(E)
\end{displaymath}
for any $k\in\mathbb{Z}$, where $b(F)$ is the sum of all betti numbers of $F$.
\end{cor}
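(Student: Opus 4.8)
The plan is to deduce Corollary \ref{flagbun} directly from Theorem \ref{L-H} by showing that a flag bundle satisfies the hypotheses of that theorem. First I would recall the relevant structural facts about flag bundles: if $E \to X$ is the flag bundle associated to a holomorphic vector bundle, then $E$ is a holomorphic fiber bundle whose typical fiber $F$ is a (partial) flag manifold, in particular a compact homogeneous rational variety. For such $F$, the Hodge–Frölicher spectral sequence degenerates and $H^{p,q}(F) = 0$ unless $p = q$, with the total cohomology $H^{\bullet,\bullet}(F)$ having dimension $b(F) = \sum_i b_i(F)$ (all in even, balanced bidegrees). More importantly, I need the existence of global $d$-closed forms on the total space $E$ whose restrictions to each fiber give a basis of $H^{\bullet,\bullet}(E_x)$; these are furnished by the Chern classes (or Chern–Weil representatives) of the tautological subbundles/quotient bundles of the flag bundle, exactly as in the classical Leray–Hirsch setup for flag bundles. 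Each such generator has pure bidegree $(u_i, v_i)$ with $u_i = v_i$.

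The key steps, in order, would be: (1) Set $r = b(F)$ and exhibit $d$-closed pure-bidegree forms $t_1, \ldots, t_r$ on $E$ — polynomials in the Chern forms of the tautological bundles — whose fiber restrictions $[t_i]_{\bar\partial}|_{E_x}$ form a basis of $H^{\bullet,\bullet}(E_x) \cong H^{\bullet,\bullet}(F)$ for every $x \in X$; this is the content of the usual Leray–Hirsch theorem for flag bundles and relies on the fact that $H^{\bullet,\bullet}(F)$ is generated as a ring by Chern classes of the tautological bundles. (2) Observe that each $t_i$ has bidegree $(u_i, v_i)$ with $u_i = v_i$ since the Chern forms live in bidegree $(k,k)$, so $v_i - u_i = 0$ for all $i$. (3) Apply Theorem \ref{L-H}: it yields an isomorphism $\bigoplus_{i=1}^{r} \mathrm{HH}_{k + v_i - u_i}(X) \tilde{\rightarrow} \mathrm{HH}_k(E)$, and since $v_i - u_i = 0$ for every $i$, the left-hand side collapses to $\bigoplus_{i=1}^{b(F)} \mathrm{HH}_k(X) = \mathrm{HH}_k(X)^{\oplus b(F)}$, giving exactly the claimed isomorphism for all $k \in \mathbb{Z}$.

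I expect the main obstacle to be step (1): carefully justifying that the Chern forms of the tautological bundles, restricted to a fiber, actually span $H^{\bullet,\bullet}(E_x)$ and have the right count $b(F)$. For the full flag bundle this is the statement that $H^*(F)$ is the coinvariant algebra of the Weyl group (generated by the Chern roots), whose dimension equals $|W|$, the order of the Weyl group, which matches $b(F)$ since the cohomology is concentrated in balanced even bidegrees; for partial flag bundles one uses the analogous Borel presentation. One must also confirm that $b(F)$ is fiberwise constant — which holds because $E \to X$ is locally trivial with connected base and fixed typical fiber $F$ — and that "general fiber" in the statement simply means this typical fiber $F$. Once the Leray–Hirsch data is in place, the rest is a bookkeeping simplification of the index shift in Theorem \ref{L-H}, and there is essentially nothing further to prove.
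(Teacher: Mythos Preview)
Your proposal is correct and follows essentially the same approach as the paper: verify the hypotheses of Theorem~\ref{L-H} by taking monomials in the Chern forms of the successive tautological quotient bundles on the flag bundle, observe that these are $d$-closed of bidegree $(j,j)$ so that $v_i - u_i = 0$, and conclude via the Hodge decomposition of the projective fiber that the number of such classes equals $b(F)$. The paper is slightly more explicit about the construction of the $t_i$ (as specific monomials $P_i$ in the Chern forms of the quotients $V^{(i)} = V_i/V_{i-1}$), but this is precisely the content of your step~(1).
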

\begin{proof}
Assume that $V$ is a holomorphic vector bundle with rank $n$ over $X$ and  $(n_1, \ldots, n_r)$ is a sequence of positive integers with $\sum_{i=1}^rn_i=n$, such that the fiber $E_x$ of $E$ over $x\in X$ is the flag manifold $\textrm{Fl}(n_1,\ldots,n_r)(V_x)=$  
\begin{displaymath}
\begin{aligned}
\{\,(W_0,W_1,\ldots,W_{r-1},W_r)\,|\,&0=W_0\subsetneq W_1\subsetneq W_2\subsetneq \ldots \subsetneq W_{r-1}\subsetneq W_r=V_x, \mbox{ where } W_i \mbox{ is a } \\
& \mbox{ complex vector space with dimension } \sum_{j=1}^{i}n_j \mbox{  for } 1\leq i\leq r\,\}.
\end{aligned}
\end{displaymath}
For $0\leq i\leq r$, assume that $V_i$ is the  universal subbundle over $E$ whose fiber over the point
$(W_0,W_1,\ldots,W_{r-1},W_r)$ is $W_i$.
Notice that $V_0=0$ and $V_r=\rho^*V$, where $\rho$ is the projection from $E$ onto $X$.
For $1\leq i\leq r$, set  $V^{(i)}=V_i/V_{i-1}$  and denote by $t_j^{(i)}\in A^{j,j}(E)$ a $j$-th Chern form of $V^{(i)}$.
For any $x\in X$, the restrictions $t_{j_i}^{(i)}|_{E_x}$ ($1\leq i\leq r$, $1\leq j_i\leq n_i$) to $E_x$ are  Chern forms of successive  universal quotient bundles of the flag manifold $E_x$.
As we know, there exists the monomials $P_i(T_1^{(1)}, \ldots,  T_{n_1}^{(1)}, \ldots, T_1^{(r)}, \ldots, T_{n_r}^{(r)})$ for $1\leq i\leq l$ such that
\begin{displaymath}
t_i:=P_i([t_{1}^{(1)}|_{E_x}]_{\bar{\partial}}, \ldots,  [t_{n_1}^{(1)}|_{E_x}]_{\bar{\partial}}, \ldots, [t_1^{(r)}|_{E_x}]_{\bar{\partial}}, \ldots, [t_{n_r}^{(r)}|_{E_x}]_{\bar{\partial}}), \mbox{  } 1\leq i\leq l,
\end{displaymath}
is a basis of  $H^{\bullet,\bullet}(E_x)$.
Clearly,  all $t_i$ are $d$-closed on $E$ and $l=\textrm{dim}_{\mathbb{C}}H^{\bullet,\bullet}(E_x)$.
By the Hodge decomposition theorem, $l=b(F)$, since the flag manifold $F=E_x$ is  projective.
By Theorem \ref{L-H}, the corollary follows.
\end{proof}

\begin{rem}
A flag manifold can be viewed as a flag bundle over a single point.
Its Hochschild homology  can be obtained by Corollary \ref{flagbun}, which is a special case of the following Section 3.2.1.
\end{rem}



\section{Holomorphic Koszul-Brylinski homology}
\subsection{Stein manifolds and flag manifolds}
Let $(X,\pi)$ be a holomorphic Poisson manifold of complex dimension $n$.

Assume that $X$ is a Stein manifold.
Then
\begin{displaymath}
_KE_1^{p,0}(X,\pi)=\Gamma(X,\Omega_X^{-p}), \quad _KE_1^{p,q}(X,\pi)=0 \mbox{ for } q\neq0,
\end{displaymath}
and
$d^{p,0}_1=\partial_{\pi}$,  $d^{p,q}_1=0$ for $q\neq0$.
We obtained that $_KE_2^{p,0}(X,\pi)=H^{-p}(\Gamma(X,\Omega_X^{\bullet}),\partial_{\pi})$ and $E_2^{p,q}(X,\pi)=0$ for $q\neq0$.
Hence the spectral sequence $_KE_1^{\bullet,\bullet}(X,\pi)$ degenerates at $E_2$-pages and
\begin{displaymath}
H_{k}(X,\pi)=H^{-k+n}(\Gamma(X,\Omega_X^{\bullet}),\partial_{\pi})=H^k(\Gamma(X,\mathcal{M}_X^{\bullet}(\pi)))
\end{displaymath}
for any $k\in\mathbb{Z}$.


Assume that $X$ is a flag manifold.
Then $_KE_1^{p,q}(X,\pi)=H^{-p,q}(X)=0$ for $p+q\neq 0$ and hence all $d_1^{p,q}=0$.
The spectral sequence $_KE_r^{\bullet,\bullet}(X,\pi)$ degenerates at $E_1$-pages.
We have
\begin{equation}\label{flagmanifold}
H_{k}(X,\pi)=\bigoplus_{p+q=k-n} H^{-p,q}(X)=\left\{
 \begin{array}{ll}
\mathbb{C}^{b(X)},&~k= n\\
 &\\
 0,&~\textrm{others}
 \end{array}
 \right.
\end{equation}
for any $k\in\mathbb{Z}$, where $b(X)$ is the sum of all betti numbers of $X$.

\subsection{Mayer-Vietoris sequence}
Suppose that $(X,\pi)$ is a holomorphic Poisson manifold.
For each open subset $U\subseteq X$, $(U,\pi|_U)$ is naturally a holomorphic Poisson manifold.
Shortly write $H_k(U,\pi|_U)$  as $H_{k}(U,\pi)$.
The inclusion $j:U\hookrightarrow X$ induces the pullback $j^*:H_{k}(X,\pi)\rightarrow H_{k}(U,\pi)$.
We have the Mayer-Vietoris type sequence for holomorphic Koszul-Brylinski homology as follows.
\begin{prop}\label{M-V}
Suppose that $(X,\pi)$ is a holomorphic Poisson manifold.   For open subsets  $U$, $V\subseteq X$, there is a long exact sequence
\begin{displaymath}
\small{
\xymatrix{
\cdots\ar[r]^{} &H_{k}(U\cup V,\pi)\ar[r]^{(j_1^*,j_2^*)^T\,\,\,\quad} &
H_{k}(U,\pi)\oplus H_{k}(V,\pi)\ar[r]^{\,\,\,\quad (j_1^*,-j_2^*)}&
H_{k}(U\cap V,\pi)\ar[r]^{\quad g}&
H_{k+1}(U\cup V,\pi)\ar[r]^{} & \cdots
},}
\end{displaymath}
where $j_1:U\cap V\rightarrow U$ and  $j_2:U\cap V\rightarrow V$ are inclusions.

\end{prop}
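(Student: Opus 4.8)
The plan is to deduce the long exact sequence from a short exact sequence of complexes, mimicking the classical topological argument but carried out at the level of the sheaf-theoretic model for Koszul-Brylinski homology. Recall from the Preliminaries that $H_k(U,\pi)\cong H^{k-n}(K^\bullet(U,\pi))$, where $K^\bullet(U,\pi)=s K^{\bullet,\bullet}(U,\pi)$ and $K^{p,q}(U,\pi)=\Gamma(U,\mathcal{A}_X^{-p,q})$ with differentials $\partial_\pi$ and $\bar\partial$. The key point is that for $W=U\cup V$ the sheaves $\mathcal{K}^{\bullet,\bullet}_X(\pi)$ are \emph{flasque-like} enough — in fact soft, as noted in the excerpt — that restriction of sections is surjective onto each of the open pieces. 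So first I would write down, for each fixed bidegree $(p,q)$, the sequence
\begin{displaymath}
\xymatrix{
0\ar[r] & \Gamma(U\cup V,\mathcal{K}^{p,q}_X(\pi))\ar[r]^{(j_1^*,j_2^*)^T\,\,} & \Gamma(U,\mathcal{K}^{p,q}_X(\pi))\oplus\Gamma(V,\mathcal{K}^{p,q}_X(\pi))\ar[r]^{\,\,\,\,(j_1^*,-j_2^*)} & \Gamma(U\cap V,\mathcal{K}^{p,q}_X(\pi))\ar[r] & 0,
}
\end{displaymath}
and verify it is exact. Exactness on the left and in the middle is the sheaf axiom; the only thing needing an argument is surjectivity on the right, which follows because $\mathcal{K}^{p,q}_X(\pi)=\mathcal{A}_X^{-p,q}$ is a fine (hence soft) sheaf, so a $(-p,q)$-form on $U\cap V$ extends to $U$ (even globally) — then subtract its restriction. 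Actually it is cleanest to argue with a smooth partition of unity subordinate to the cover $\{U,V\}$ of $U\cup V$: given $\omega$ on $U\cap V$, write $\omega=\rho_V\omega$ on $U$ minus $(-\rho_U\omega)$ on $V$ in the obvious way, so the pair $(\rho_V\omega|_U,\,-\rho_U\omega|_V)$ maps to $\omega$. Here $(\rho_V\omega)$ is understood to be extended by zero outside the support; this is legitimate precisely because we are dealing with smooth forms.

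Second, I would observe that all three restriction maps commute with both differentials $\partial_\pi$ and $\bar\partial$ — this is immediate since $\partial_\pi$ is defined pointwise by contraction with $\pi$ and composition with $\partial$, and restriction of forms commutes with both, while $\bar\partial$ visibly commutes with restriction. Hence the displayed sequence is a short exact sequence of double complexes, natural in $(p,q)$. Passing to associated simple complexes preserves exactness (taking the simple complex is an exact functor, being a finite direct sum in each total degree), so we obtain a short exact sequence of complexes
\begin{displaymath}
0\rightarrow K^\bullet(U\cup V,\pi)\rightarrow K^\bullet(U,\pi)\oplus K^\bullet(V,\pi)\rightarrow K^\bullet(U\cap V,\pi)\rightarrow 0.
\end{displaymath}
Third, I would invoke the long exact sequence in cohomology of this short exact sequence of complexes, and then reindex via the isomorphism $H_k(\,\cdot\,,\pi)\cong H^{k-n}(K^\bullet(\,\cdot\,,\pi))$ of (\ref{computation1}). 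The connecting homomorphism in cohomological degree $k-n$ becomes a map $g\colon H_k(U\cap V,\pi)\rightarrow H_{k+1}(U\cup V,\pi)$, since raising cohomological degree by one raises Koszul-Brylinski index by one (the index $k$ corresponds to cohomological degree $k-n$, so $k-n+1$ corresponds to index $k+1$). Matching up the maps in the long exact sequence with the restriction-induced pullbacks $j_1^*,j_2^*$ gives exactly the stated sequence.

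The one genuinely non-formal step — and the main (mild) obstacle — is the surjectivity of the restriction map $\mathcal{K}^{p,q}$-sections from $U\cup V$ onto those on $U\cap V$, together with the naturality/compatibility bookkeeping of $\partial_\pi$ with restriction. The surjectivity is where one must use that the relevant sheaves are soft, equivalently that smooth bump functions are available; this is why the argument is run on the smooth-form model $\mathcal{K}^{\bullet,\bullet}_X(\pi)$ rather than on the holomorphic complex $\mathcal{M}^\bullet_X(\pi)$, where no such extension is possible. Everything else is the standard homological algebra of the snake/zig-zag lemma applied to a short exact sequence of complexes, plus the degree shift by $n$. I would also remark that exactly the same argument, run on the current model $\mathcal{P}^{\bullet,\bullet}_X(\pi)$, yields the same sequence, consistent with the identification of the two models noted earlier; and that the sequence is natural with respect to morphisms of holomorphic Poisson manifolds compatible with the open covers.
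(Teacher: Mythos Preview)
Your proposal is correct and follows essentially the same route as the paper: establish the short exact sequence of $\mathcal{A}^{-p,q}$-sections using a smooth partition of unity $\{\rho_U,\rho_V\}$ (with the pair $(\rho_V\eta,-\rho_U\eta)$ witnessing surjectivity), pass to the associated simple complexes $K^\bullet$, and take the long exact sequence in cohomology with the shift $H_k\cong H^{k-n}$. One small wording slip: softness does not give extension of sections from an \emph{open} set, so your first attempt (``extends to $U$, even globally'') is not quite right; but you immediately replace it with the partition-of-unity argument, which is exactly what the paper does.
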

\begin{proof}
Denote $f=(j_1^*,j_2^*)^T$ and $g=(j_1^*,-j_2^*)$.
For any $p$, $q\in\mathbb{Z}$, consider the sequence
\begin{equation}\label{exact}
\xymatrix{
0\ar[r]^{} &\mathcal{A}^{-p,q}(U\cup V)\ar[r]^{f\quad} &
\mathcal{A}^{-p,q}(U)\oplus \mathcal{A}^{-p,q}(V)\ar[r]^{\quad g}&
\mathcal{A}^{-p,q}(U\cap V)\ar[r]^{} & 0
}.
\end{equation}
Evidently, $f$ is injective and $\textrm{ker}g=\textrm{im} f$.
Let $\{\rho_U, \rho_V\}$ be a partition of unity subordinate to the open covering $\{U,V\}$ of $U\cup V$.
That is to say, $\rho_U$, $\rho_V\in C^{\infty}(U\cup V)$ satisfy that $\rho_U+\rho_V=1$ and $\textrm{supp}\rho_U\subseteq U$, $\textrm{supp}\rho_V\subseteq V$.
For any $\eta\in \mathcal{A}^{-p,q}(U\cap V)$, $\rho_V\eta\in \mathcal{A}^{-p,q}(U)$ and $-\rho_U\eta \in\mathcal{A}^{-p,q}(V)$.
Clearly, $g(\rho_V\eta,-\rho_U\eta)^T=\eta$.
So $g$ is surjective.
We proved that (\ref{exact}) is exact.
Hence, we easily obtain the short exact sequence  of complexes
\begin{displaymath}
0\rightarrow K^{\bullet}(U\cup V,\pi)\rightarrow K^{\bullet}(U,\pi)\oplus K^{\bullet}(V,\pi)\rightarrow K^{\bullet}(U\cap V,\pi)\rightarrow 0,
\end{displaymath}
which induces the long exact sequence in this proposition.
\end{proof}


\subsection{K\"{u}nneth theorem}
Suppose that $(X,\pi)$ and $(Y,\sigma)$ are holomorphic Poisson manifolds.
Define the bivector filed $\pi\oplus\sigma$ as follows:

Let $(U,z_1, \ldots, z_n)$ and $(V,w_1, \ldots, w_m)$ be the charts  of local coordinates of $X$ and $Y$ respectively.
If $\pi|_U=\sum_{1\leq i,j\leq n} \pi_{ij}(z)\frac{\partial}{\partial z_i}\wedge \frac{\partial}{\partial z_j}$
and $\sigma|_U=\sum_{1\leq k,l\leq m} \sigma_{kl}(w)\frac{\partial}{\partial w_k}\wedge \frac{\partial}{\partial w_l}$,
then $\pi\oplus \sigma$ is defined as
\begin{displaymath}
\sum_{1\leq i,j\leq n} \pi_{ij}(z)\frac{\partial}{\partial z_i}\wedge \frac{\partial}{\partial z_j}+\sum_{1\leq k,l\leq m} \sigma_{kl}(w)\frac{\partial}{\partial w_k}\wedge \frac{\partial}{\partial w_l}
\end{displaymath}
on $U\times V$.

\begin{thm}\label{K}
Assume that $(X,\pi)$ and $(Y,\sigma)$ are holomorphic Poisson manifolds.   If $X$ or $Y$ is compact, then there is an isomorphism
\begin{displaymath}
\bigoplus\limits_{p+q=k}H_{p}(X,\pi)\otimes_{\mathbb{C}} H_{q}(Y,\sigma)\cong H_{k}(X\times Y,\pi\oplus \sigma)
\end{displaymath}
for any $k$.
\end{thm}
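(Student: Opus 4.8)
The plan is to identify the claimed map with the one induced in cohomology by the exterior product of differential forms, and then to verify it is an isomorphism by a comparison of spectral sequences, with compactness entering only through the Künneth theorem for Dolbeault cohomology. Writing $n=\dim_{\mathbb C}X$, $m=\dim_{\mathbb C}Y$ and $p_X\colon X\times Y\to X$, $p_Y\colon X\times Y\to Y$ for the projections, the first point to check is that $p_X$ and $p_Y$ are morphisms of holomorphic Poisson manifolds: this is immediate from the local description of $\pi\oplus\sigma$ together with the criterion $\rho_*(\tau_x)=\tau'_{\rho(x)}$, and it gives $\partial_\pi p_X^*=p_X^*\partial_\pi$ and $\partial_\sigma p_Y^*=p_Y^*\partial_\sigma$. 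In local coordinates one also sees $l_{\pi\oplus\sigma}=l_\pi+l_\sigma$, with $l_\pi$ contracting only the $X$-directions and $l_\sigma$ only the $Y$-directions, so that $l_{\pi\oplus\sigma}(p_X^*\alpha\wedge p_Y^*\beta)=p_X^*(l_\pi\alpha)\wedge p_Y^*\beta+p_X^*\alpha\wedge p_Y^*(l_\sigma\beta)$; feeding this into $\partial_{\pi\oplus\sigma}=l_{\pi\oplus\sigma}\partial-\partial l_{\pi\oplus\sigma}$ and using the ordinary Leibniz rule for $\partial$ on the product will yield the graded Leibniz identity $\partial_{\pi\oplus\sigma}(p_X^*\alpha\wedge p_Y^*\beta)=p_X^*(\partial_\pi\alpha)\wedge p_Y^*\beta\pm p_X^*\alpha\wedge p_Y^*(\partial_\sigma\beta)$ (and likewise with $\bar\partial$). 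Hence the exterior product $\alpha\otimes\beta\mapsto p_X^*\alpha\wedge p_Y^*\beta$ defines a morphism of double complexes $\Phi\colon ss\bigl(K^{\bullet,\bullet}(X,\pi)\otimes K^{\bullet,\bullet}(Y,\sigma)\bigr)\to K^{\bullet,\bullet}(X\times Y,\pi\oplus\sigma)$.

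Next I would compare the spectral sequences obtained by filtering both sides by the first degree. All the double complexes in sight are bounded (the manifolds are finite dimensional), so both filtrations are bounded, both spectral sequences converge, and $\Phi$ induces a morphism between them. On the $E_1$-page of the source, computing the cohomology of each column with respect to $\bar\partial$ and invoking the algebraic Künneth theorem for complexes of ${\mathbb C}$-vector spaces identifies it with $\bigoplus_{p_1+p_2=p,\ q_1+q_2=q}H^{-p_1,q_1}(X)\otimes_{\mathbb C}H^{-p_2,q_2}(Y)$; by (\ref{ss1}) the $E_1$-page of the target is $H^{-p,q}(X\times Y)$, and the map between them induced by $\Phi$ is precisely $[\alpha]\otimes[\beta]\mapsto[p_X^*\alpha\wedge p_Y^*\beta]$, i.e.\ the Künneth map for Dolbeault cohomology. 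This is where I use that $X$ or $Y$ is compact: under that hypothesis this map is an isomorphism, so $\Phi$ is an isomorphism on $E_1$-pages, hence a quasi-isomorphism of the associated simple complexes.

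Finally I would assemble the statement. On one hand $s\,ss\bigl(K^{\bullet,\bullet}(X,\pi)\otimes K^{\bullet,\bullet}(Y,\sigma)\bigr)$ is, under the conventions of \cite{M2}, the tensor product of complexes $K^{\bullet}(X,\pi)\otimes_{\mathbb C}K^{\bullet}(Y,\sigma)$, so the algebraic Künneth theorem together with (\ref{computation1}) gives $H^{k-n-m}\bigl(s\,ss(\cdots)\bigr)\cong\bigoplus_{p+q=k}H_p(X,\pi)\otimes_{\mathbb C}H_q(Y,\sigma)$. On the other hand, by the preceding step and (\ref{computation1}) again, $\Phi$ induces an isomorphism $H^{k-n-m}\bigl(s\,ss(\cdots)\bigr)\xrightarrow{\ \sim\ }H^{k-n-m}\bigl(K^{\bullet}(X\times Y,\pi\oplus\sigma)\bigr)\cong H_k(X\times Y,\pi\oplus\sigma)$. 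Composing these two isomorphisms gives the desired one, and tracking $\Phi$ shows it is the exterior-product map.

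As for the main obstacle: the homological-algebra skeleton (a morphism of bounded double complexes that is an isomorphism on $E_1$) is routine once it is in place, and the genuinely substantive input is the Künneth theorem for Dolbeault cohomology under one compactness assumption, which is where finiteness is unavoidable. The most error-prone part of the write-up is the first step — checking that the exterior product is compatible with $\partial_{\pi\oplus\sigma}$ and that the Koszul signs match the conventions used for $ss(-\otimes-)$ and $s$ — so that is the step I would write out most carefully.
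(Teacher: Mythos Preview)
Your proposal is correct and follows essentially the same route as the paper: define the exterior-product morphism $\Phi$ between the double complex $ss\bigl(K^{\bullet,\bullet}(X,\pi)\otimes K^{\bullet,\bullet}(Y,\sigma)\bigr)$ and $K^{\bullet,\bullet}(X\times Y,\pi\oplus\sigma)$, verify the Leibniz identity for $\partial_{\pi\oplus\sigma}$, identify the induced map on $E_1$-pages with the Dolbeault K\"unneth map (isomorphic when one factor is compact, via \cite[IX, (5.23)--(5.24)]{Dem}), and conclude by the algebraic K\"unneth theorem and (\ref{computation1}). The only cosmetic differences are that the paper computes $l_{\pi\oplus\sigma}$ by splitting it as $l_{\pi\oplus 0}+l_{0\oplus\sigma}$ rather than by appealing to $p_X,p_Y$ being Poisson morphisms, and it cites \cite[Proposition~2.7]{M2} explicitly for the two places where you invoke the algebraic K\"unneth theorem for tensor products of (double) complexes.
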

\begin{proof}
Set $n=\textrm{dim}_{\mathbb{C}}X$, $m=\textrm{dim}_{\mathbb{C}}Y$ and let $pr_1$, $pr_2$ be the projections from $X\times Y$ onto $X$, $Y$, respectively.

Suppose that $\alpha\in \mathcal{A}^{\bullet}(X)$ and $\beta\in \mathcal{A}^{\bullet}(Y)$.
By the definition of $\pi\oplus\sigma$, we get
\begin{displaymath}
\begin{aligned}
l_{\pi\oplus\sigma}[pr_1^*(\alpha)\wedge pr_2^*(\beta)]=&l_{\pi\oplus 0}pr_1^*(\alpha)\wedge pr_2^*(\beta)+pr_1^*(\alpha)\wedge l_{0\oplus\sigma}pr_2^*(\beta)\\
 =&pr_1^*(l_{\pi}\alpha)\wedge pr_2^*(\beta)+pr_1^*(\alpha)\wedge pr_2^*(l_{\sigma}\beta),
\end{aligned}
\end{displaymath}
where the second equality uses the fact that $pr_{1*}(\pi\oplus 0)=\pi$ and $pr_{2*}(0\oplus\sigma)=\sigma$.
Hence,

\begin{equation}\label{external}
\partial_{\pi\oplus\sigma}[pr_1^*(\alpha)\wedge pr_2^*(\beta)]=pr_1^*(\partial_{\pi}\alpha)\wedge pr_2^*(\beta)+(-1)^{deg \alpha}pr_1^*(\alpha)\wedge pr_2^*(\partial_{\sigma}\beta).
\end{equation}

Consider the two double complexes
\begin{displaymath}
S^{\bullet,\bullet}:=ss\left(K^{\bullet,\bullet}(X,\pi)\otimes_{\mathbb{C}} K^{\bullet,\bullet}(Y,\sigma)\right) \quad\mbox{ and  } \quad T^{\bullet,\bullet}:=K^{\bullet,\bullet}(X\times Y,\pi\oplus\sigma).
\end{displaymath}
By \cite[Proposition  2.7 (2)]{M2} and  (\ref{ss1}),
we have the first pages
\begin{displaymath}
\begin{aligned}
_{S}E_1^{p,q}
=&\bigoplus\limits_{\substack{a+b=p\\r+s=q}}H^{r}(K^{a,\bullet}(X,\pi))\otimes_{\mathbb{C}} H^{s}(K^{b,\bullet}(Y,\sigma))\\
=&\bigoplus\limits_{\substack{a+b=p\\r+s=q}}H^{-a,r}(X)\otimes_{\mathbb{C}} H^{-b,s}(Y)
\end{aligned}
\end{displaymath}
and $_{T}E_1^{p,q}=H^{-p,q}(X\times Y)$
of the spectral sequences associated to $S^{\bullet,\bullet}$ and $T^{\bullet,\bullet}$ respectively.
By (\ref{external}), $f=pr_1^*(\bullet)\wedge pr_2^*(\bullet):S^{\bullet,\bullet}\rightarrow T^{\bullet,\bullet}$ is a morphism between double complexes.
The morphism $E^{p,q}_1(f): _{S}E_1^{p,q}\rightarrow _{T}E_1^{p,q}$ at $E_1$-pages induced by $f$ is just the cartesian product
\begin{displaymath}
\bigoplus\limits_{\substack{a+b=p\\r+s=q}}H^{-a,r}(X)\otimes_{\mathbb{C}} H^{-b,s}(Y)\rightarrow H^{-p,q}(X\times Y).
\end{displaymath}
Notice that  $\Omega_{X\times Y}^{-p}=\bigoplus_{a+b=p}\left(\Omega_{X}^{-a}\boxtimes\Omega_{Y}^{-b}\right)$, where $\boxtimes$ means the analytic external tensor product of coherent analytic sheaves.
By \cite[IX, (5.23) (5.24)]{Dem}, $E^{p,q}_1(f)$ is an isomorphism for any $p$, $q\in\mathbb{Z}$,
so is the morphism $H^{k-n-m}(sS^{\bullet,\bullet})\rightarrow H^{k-n-m}(sT^{\bullet,\bullet})$ induced by $f$ for any $k\in\mathbb{Z}$.
By \cite[Proposition 2.7 (1)]{M2} and (\ref{computation1}),
\begin{displaymath}
\begin{aligned}
H^{k-n-m}(sS^{\bullet,\bullet})\cong&\bigoplus\limits_{p+q=k-n-m}H^{p}(K^{\bullet}(X,\pi))\otimes_{\mathbb{C}} H^{q}(K^{\bullet}(Y,\sigma))\\
=& \bigoplus\limits_{p+q=k}H_{p}(X,\pi)\otimes_{\mathbb{C}} H_{q}(Y,\sigma)
\end{aligned}
\end{displaymath}
and $H^{k-n-m}(sT^{\bullet,\bullet})\cong H_{k}(X\times Y,\pi\oplus\sigma)$.
We conclude this  theorem.
\end{proof}

\subsection{Blow-up formulae}
Suppose that $\rho:X\rightarrow Y$ is a proper  holomorphic map of complex manifolds.
We have the \emph{projection formula}
\begin{displaymath}\label{projection formula}
\rho_*(T\wedge \rho^*u)=\rho_*T\wedge u
\end{displaymath}
for any  $T\in \mathcal{D}^{\bullet,\bullet}(X)$ and $u\in \mathcal{A}^{\bullet,\bullet}(Y)$.
Hence
\begin{equation}\label{projection formula1}
\rho_*(\varphi\cup \rho^*\eta)=\rho_*\varphi\cup\eta
\end{equation}
for $\varphi\in H^{\bullet,\bullet}(X)$ and $\eta\in H^{\bullet,\bullet}(Y)$.
Let $\rho:(X,\pi)\rightarrow (Y,\sigma)$ be a proper surjective  morphism of holomorphic Poisson manifolds with the same dimensions.
Then $\rho_*(1_X)=\deg\rho\cdot 1_Y$, where $1_X$, $1_Y$ mean the currents defined by the constant $1$ on $X$, $Y$ respectively and $\deg \rho$ denotes the degree of $\rho$.
So
\begin{equation}\label{projection formula2}
\rho_*\rho^*=\deg \rho\cdot id:H_{k}(Y,\sigma)\rightarrow H_{k}(Y,\sigma).
\end{equation}
See also \cite[Theorem 3.6]{CCYY} for a Poisson modification $\rho$,
where a Poisson modification means a surjective morphism $\rho: (X,\pi)\rightarrow (Y,\sigma) $ of
compact holomorphic Poisson manifolds  with the same dimensions
satisfying that  there is an analytic subset $S\subseteq Y$ of codimension $\geq 2$ such that the restriction
$\rho: X-\rho^{-1}(S) \rightarrow  Y-S$ is biholomorphic.

Let $\rho:\widetilde{X}\rightarrow X$ be the blow-up of a  complex manifold $X$ along a  complex submanifold $Y$ with the exceptional divisor $E$.
Assume that $i_Y:Y\rightarrow X$ is the inclusion.
As we know, $\rho|_E:E\rightarrow Y$ can be naturally viewed as the projective bundle $\mathbb{P}(N_{Y/X})$ associated to the normal bundle $N_{Y/X}$ of $Y$ in $X$.
Let $t\in \mathcal{A}^{1,1}(E)$ be a first Chern form of the universal line bundle $\mathcal{O}_{E}(-1)$ on $E\cong{\mathbb{P}(N_{Y/X})}$ and let $h$ be the Dolbeault cohomology class of $t$.
Suppose that $i_E:E\rightarrow \widetilde{X}$ is the inclusion and $r=\textrm{codim}_{\mathbb{C}}Y\geq 2$.
Notice that
\begin{equation}\label{computation}
(\rho|_E)_*h^{i}=0,\quad 0\leq i\leq r-2, \qquad (\rho|_E)_*h^{r-1}=(-1)^{r-1},
\end{equation}
see \cite[p.20]{M3}.

\begin{lem}\label{blowup-Dolbeault}
For any $p$, $q\in\mathbb{Z}$,
\begin{displaymath}
\begin{aligned}
F:H^{p,q}(X)\oplus H^{p-1,q-1}(E)&\rightarrow H^{p,q}(\widetilde{X})\oplus H^{p-r,q-r}(Y)\\
(\alpha,\,\beta)^T& \mapsto (\rho^*\alpha+i_{E*}\beta,\,(\rho|_E)_*\beta)^T
\end{aligned}
\end{displaymath}
and
\begin{displaymath}
\begin{aligned}
G:H^{p,q}(\widetilde{X})\oplus H^{p,q}(Y)&\rightarrow H^{p,q}(X)\oplus H^{p,q}(E)\\
(\alpha,\,\beta)^T& \mapsto (\rho_*\alpha,\,i_E^*\alpha-(\rho|_E)^*\beta)^T
\end{aligned}
\end{displaymath}
are isomorphisms.
\end{lem}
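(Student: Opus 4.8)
The plan is to reduce both statements to two results from our previous works \cite{M1,M3}: the blow-up formula for Dolbeault cohomology, giving the direct sum decomposition
\[
H^{p,q}(\widetilde{X})=\rho^{*}H^{p,q}(X)\oplus\bigoplus_{i=1}^{r-1}i_{E*}\bigl(h^{i-1}\cup(\rho|_{E})^{*}H^{p-i,q-i}(Y)\bigr),
\]
and the Leray--Hirsch theorem for the projective bundle $\rho|_{E}\colon E\cong\mathbb{P}(N_{Y/X})\to Y$, giving $H^{p,q}(E)=\bigoplus_{j=0}^{r-1}h^{j}\cup(\rho|_{E})^{*}H^{p-j,q-j}(Y)$. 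Combining these with the compatibilities $i_{E}^{*}\rho^{*}=(\rho|_{E})^{*}i_{Y}^{*}$, the self-intersection identity $i_{E}^{*}i_{E*}(\eta)=h\cup\eta$ (from $i_{E}^{*}\mathcal{O}_{\widetilde{X}}(E)\cong\mathcal{O}_{E}(-1)$), and the excess-intersection identity $\rho^{*}i_{Y*}(\gamma)=i_{E*}(\theta\cup(\rho|_{E})^{*}\gamma)$ with $\theta\in H^{r-1,r-1}(E)$ (the top Chern class of the excess bundle) having leading Leray--Hirsch coefficient $\pm h^{r-1}$, I would first establish the two short exact sequences
\[
0\to H^{p,q}(X)\xrightarrow{(\rho^{*},\,i_{Y}^{*})^{T}}H^{p,q}(\widetilde{X})\oplus H^{p,q}(Y)\xrightarrow{(i_{E}^{*},\,-(\rho|_{E})^{*})}H^{p,q}(E)\to0
\]
and
\[
0\to H^{p-r,q-r}(Y)\xrightarrow{\;\gamma\mapsto(i_{Y*}\gamma,\,-\theta\cup(\rho|_{E})^{*}\gamma)^{T}\;}H^{p,q}(X)\oplus H^{p-1,q-1}(E)\xrightarrow{(\rho^{*},\,i_{E*})}H^{p,q}(\widetilde{X})\to0 .
\]
In each, the composites vanish by the above identities, the right-hand maps are surjective and the left-hand maps are injective by the two decompositions, and equality of kernel and image then follows from a dimension count using $\dim H^{p,q}(\widetilde{X})=\dim H^{p,q}(X)+\sum_{i=1}^{r-1}\dim H^{p-i,q-i}(Y)$ and $\dim H^{p,q}(E)=\sum_{j=0}^{r-1}\dim H^{p-j,q-j}(Y)$.

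The second ingredient is the elementary fact that if $0\to A\xrightarrow{\iota}B\xrightarrow{q}C\to0$ is exact and $\tau\colon B\to A$ satisfies $\tau\iota=\mathrm{id}_{A}$, then $(\tau,q)^{T}\colon B\to A\oplus C$ is an isomorphism; I would invoke it twice. For $G$ apply it to the first sequence with $\tau(\alpha,\beta)=\rho_{*}\alpha$: then $\tau\circ(\rho^{*},i_{Y}^{*})^{T}=\rho_{*}\rho^{*}=\mathrm{id}$ on $H^{p,q}(X)$ by the projection formula \eqref{projection formula1} (here $\deg\rho=1$), and $(\tau,q)^{T}$ is exactly $G$. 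For $F$ apply it to the second sequence with $\tau(\alpha,\beta)=-(\rho|_{E})_{*}\beta$: by \eqref{computation} the lower Leray--Hirsch terms of $\theta$ are killed by $(\rho|_{E})_{*}$ and the top term gives $\pm(\rho|_{E})_{*}h^{r-1}=\pm(-1)^{r-1}$, so with the sign fixed $(\rho|_{E})_{*}\theta=1$; hence $\tau$ composed with the left-hand map of the second sequence sends $\gamma$ to $(\rho|_{E})_{*}(\theta\cup(\rho|_{E})^{*}\gamma)=((\rho|_{E})_{*}\theta)\cup\gamma=\gamma$, so $(\tau,q)^{T}$ is an isomorphism, and $F$ --- which differs from it only by a transposition of the two target summands and a sign on one of them --- is an isomorphism as well.

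The step I expect to be the main obstacle is not conceptual but one of sign conventions and the verification of exactness of the second sequence: one must fix the orientation of $\mathbb{P}(N_{Y/X})$ so that $(\rho|_{E})_{*}h^{r-1}=(-1)^{r-1}$ as in \eqref{computation}, keep track of the sign in $i_{E}^{*}i_{E*}(\cdot)=h\cup(\cdot)$, and --- the genuinely delicate point --- pin down the leading Leray--Hirsch coefficient of the excess class $\theta$, which is precisely what makes $\gamma\mapsto(i_{Y*}\gamma,-\theta\cup(\rho|_{E})^{*}\gamma)^{T}$ injective with image equal to $\ker(\rho^{*},i_{E*})$. Should one wish to avoid the excess-intersection formula, an equivalent route is to expand $F$ and $G$ directly in the bases furnished by the blow-up formula and the Leray--Hirsch theorem, in which both become block-triangular maps whose diagonal blocks are identities up to sign, and hence isomorphisms.
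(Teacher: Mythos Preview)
Your main line of argument has a genuine gap: the step ``equality of kernel and image then follows from a dimension count'' is not valid in this paper's setting, since $X$ is \emph{not} assumed compact and the Dolbeault groups may well be infinite-dimensional (already $H^{p,0}$ of a Stein manifold is). The direct-sum decompositions you cite are in fact strong enough to prove middle-exactness by an explicit chase --- write the classes in those bases and solve --- but a bare dimension count is not. Your alternative route at the end (expand $F$ and $G$ in the bases supplied by the blow-up formula and Leray--Hirsch and check invertibility block by block) does not suffer from this defect and is essentially how the paper handles $F$: injectivity and surjectivity are proved by hand using \eqref{computation} and the uniqueness in \cite[Theorem~1.2]{M1}, and in place of your excess-intersection formula the paper simply notes that $\rho_*$ kills $(-1)^{r-1}i_{E*}[h^{r-1}\cup(\rho|_E)^*\omega]-\rho^*i_{Y*}\omega$, so this element has no $\rho^*$-component in the blow-up decomposition.

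For $G$ the paper takes a route genuinely different from either of yours. Rather than invoking the self-intersection identity $i_E^*i_{E*}(\cdot)=h\cup(\cdot)$ globally and expanding in bases, it first proves $G$ is an isomorphism when $Y$ is \emph{Stein} by the direct method (this is where \cite[Lemma~4.4]{M1} enters), and then promotes this to arbitrary $Y$ by a Mayer--Vietoris/local-to-global argument: one forms the morphism $g_U^{p,\bullet}$ of complexes over each open $U\subseteq X$, verifies that the statement ``$G_U^{p,q}$ is an isomorphism for all $p,q$'' satisfies the local, disjoint, and Mayer--Vietoris conditions of \cite[Lemma~2.1]{M1}, and concludes. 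Your basis-expansion route --- once the dimension count is replaced by the explicit computation --- is shorter and avoids this gluing, but it requires the self-intersection and excess-intersection identities for Dolbeault cohomology in full generality on possibly non-compact manifolds; the paper trades those inputs for the extra inductive layer.
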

\begin{proof}
Firstly, we prove that $F$ is an isomorphism.
\vspace{1mm}

Suppose that  $F(\alpha,\,\beta)=0$ for $\alpha\in H^{p,q}(X)$, $\beta\in H^{p-1,q-1}(E)$.
Then $\rho^*\alpha+i_{E*}\beta=0$ and $(\rho|_E)_*\beta=0$.
By \cite[Corollary 3.2]{M1},
$\beta=\sum_{i=0}^{r-1}h^{i}\cup (\rho|_{E})^*\theta_i$
for some $\theta_i\in H^{p-i-1,q-i-1}(Y)$, $0\leq i\leq r-1$.
By (\ref{projection formula1}) and (\ref{computation}), $(\rho|_E)_*\beta=(-1)^{r-1}\theta_{r-1}$,
which implies that   $\theta_{r-1}=0$.
Therefore, $\beta=\sum_{i=0}^{r-2}h^{i}\cup (\rho|_{E})^*\theta_i$, and then
\begin{displaymath}
\rho^*\alpha+\sum_{i=1}^{r-1}i_{E*}[h^{i-1}\cup (\rho|_{E})^*\theta_{i-1}]=0.
\end{displaymath}
By \cite[Theorem 1.2]{M1}, $\alpha=0$ and $\theta_i=0$ for $0\leq i\leq r-2$.
So $\beta=0$.
Thus, $F$ is injective.
Give any $(\eta,\omega)\in H^{p,q}(\widetilde{X})\oplus H^{p-r,q-r}(Y)$.
Then $(-1)^{r-1}i_{E*}[h^{r-1}\cup (\rho|_E)^*\omega]-\rho^*i_{Y*}\omega\in H^{p,q}(\widetilde{X})$.
By \cite[Theorem 1.2]{M1}, there exist $\zeta\in H^{p,q}(X)$ and $\xi_i\in H^{p-i,q-i}(Y)$, $1\leq i\leq r-1$ such that
\begin{equation}\label{a}
(-1)^{r-1}i_{E*}[h^{r-1}\cup (\rho|_E)^*\omega]-\rho^*i_{Y*}\omega=\rho^*\zeta+\sum_{i=1}^{r-1}i_{E*}[h^{i-1}\cup (\rho|_{E})^*\xi_{i}].
\end{equation}
Pushforward (\ref{a}) by $\rho_*$, we have $\zeta=0$  by (\ref{projection formula1}).  So
\begin{equation}\label{b}
(-1)^{r-1}i_{E*}[h^{r-1}\cup (\rho|_E)^*\omega]-\rho^*i_{Y*}\omega=\sum_{i=1}^{r-1}i_{E*}[h^{i-1}\cup (\rho|_{E})^*\xi_{i}].
\end{equation}
By \cite[Theorem 1.2]{M1}, there exist $\gamma\in H^{p,q}(X)$ and $\delta_i\in H^{p-i,q-i}(Y)$ , $1\leq i\leq r-1$ such that
\begin{equation}\label{c}
\eta=\rho^*\gamma+\sum_{i=1}^{r-1}i_{E*}[h^{i-1}\cup (\rho|_{E})^*\delta_{i}].
\end{equation}
Set $\alpha:=\gamma-i_{Y*}\omega$ and $\beta:=\sum_{i=0}^{r-2}h^{i}\cup (\rho|_{E})^*(\delta_{i+1}-\xi_{i+1})+(-1)^{r-1}h^{r-1}\cup (\rho|_E)^*\omega$.
We easily check that, $\rho^*\alpha+i_{E*}\beta=\eta$ by (\ref{b}), (\ref{c}) and $(\rho|_E)_*\beta=\omega$ by (\ref{projection formula1}), (\ref{computation}).
Hence $F$ is surjective.
Up to now, we proved that $F$ is an isomorphism.
\vspace{3mm}

Secondly, we prove that $G$ is an isomorphism.
\vspace{1mm}

Assume that  $Y$ is a Stein manifold.
Suppose that  $G(\alpha,\,\beta)^T=0$ for $\alpha\in H^{p,q}(\widetilde{X})$, $\beta\in H^{p,q}(Y)$.
Then $\rho_*\alpha=0$ and $i_E^*\alpha-(\rho|_E)^*\beta=0$.
By \cite[Theorem 1.2]{M1},
\begin{displaymath}
\alpha=\rho^*\gamma+\sum\limits_{i=1}^{r-1}i_{E*} [h^{i-1}\cup (\rho|_{E})^*\beta_i]
\end{displaymath}
for some $\gamma \in H^{p,q}(X)$, $\beta_i\in H^{p-i,q-i}(Y)$, $1\leq i\leq r-1$.
Then
\begin{displaymath}
\begin{aligned}
(\rho|_E)^*\beta=&i_E^*\rho^*\gamma+\sum\limits_{i=1}^{r-1}i_E^*i_{E*}[h^{i-1}\cup (\rho|_{E})^*\beta_i]\\
=& (\rho|_E)^*i_Y^*\gamma+\sum\limits_{i=1}^{r-1} h^{i}\cup (\rho|_{E})^*\beta_i,
\end{aligned}
\end{displaymath}
where the second equality used  \cite[Lemma 4.4]{M1}.
By \cite[Corollary 3.2]{M1}, $\beta=i_Y^*\gamma$ and $\beta_i=0$ for $1\leq i\leq r-1$.
So $\alpha=\rho^*\gamma$.
By (\ref{projection formula1}), $\gamma=\rho_*\rho^*\gamma=\rho_*\alpha=0$ and then, $\alpha=0$, $\beta=0$.
Thus, $F$ is injective.
For any $(\eta,\omega)^T\in H^{p,q}(X)\oplus H^{p,q}(E)$,  $\omega=\sum_{i=0}^{r-1}h^{i}\cup (\rho|_{E})^*\theta_i$ for some $\theta_i\in H^{p-i,q-i}(Y)$   by \cite[Corollary 3.2]{M1}.
Set $\alpha:=\rho^*\eta+\sum_{i=1}^{r-1}i_{E*}(h^{i-1}\cup (\rho|_{E})^*\theta_{i})$ and $\beta:=i_Y^*\eta-\theta_0$.
By (\ref{projection formula1}) and (\ref{computation}),
\begin{displaymath}
\rho_*\alpha=\rho_*\rho^*\eta+\sum\limits_{i=1}^{r-1}i_{Y*}[(\rho|_{E})_*(h^{i-1}\cup (\rho|_{E})^*\theta_{i})]=\eta,
\end{displaymath}
and by \cite[Lemma 4.4]{M1},
\begin{displaymath}
\begin{aligned}
i_E^*\alpha-(\rho|_E)^*\beta=&\left[i_E^*\rho^*\eta+\sum\limits_{i=1}^{r-1}i_E^*i_{E*}[h^{i-1}\cup (\rho|_{E})^*\theta_{i}]\right]-\left[(\rho|_{E})^*i_Y^*\eta-(\rho|_E)^*\theta_0\right]\\
=& \sum\limits_{i=0}^{r-1}h^{i}\cup (\rho|_{E})_*\theta_{i}=\omega.
\end{aligned}
\end{displaymath}
Thus, $G(\alpha,\,\beta)^T=(\eta,\,\omega)^T$.
Hence $G$ is surjective.
We proved that $G$ is an isomorphism if  $Y$ is Stein.

Go back to general cases.
Set $\widetilde{U}:=\rho^{-1}(U)$ and
\begin{displaymath}
\mathcal{F}^{p,q}(U):=\mathcal{A}^{p,q}(\widetilde{U})\oplus \mathcal{A}^{p,q}(Y\cap U),
\end{displaymath}
\begin{displaymath}
\mathcal{G}^{p,q}(U):=\mathcal{D}^{p,q}(U)\oplus \mathcal{A}^{p,q}(E\cap\widetilde{U}),
\end{displaymath}
\begin{displaymath}
g^{p,q}_U:\mathcal{F}^{p,q}(U)\rightarrow\mathcal{G}^{p,q}(U),\quad  (\alpha,\,\beta)\mapsto (\rho_*\alpha,\,i_E^*\alpha-(\rho|_E)^*\beta)
\end{displaymath}
for any open set $U\subseteq X$ and any $p$, $q\in\mathbb{Z}$.
Then $g^{\bullet,\bullet}_U$ gives a morphism  $(\mathcal{F}^{\bullet,\bullet}(U),\partial,\bar{\partial})\rightarrow (\mathcal{G}^{\bullet,\bullet}(U),\partial,\bar{\partial})$ of double complexes, and furthermore induces a morphism
\begin{displaymath}
G^{p,q}_U:H^{p,q}(\widetilde{U})\oplus H^{p,q}(Y\cap U)\rightarrow H^{p,q}(U)\oplus H^{p,q}(E\cap \widetilde{U}).
\end{displaymath}
Denote by $\mathcal{P}(U)$ the statement that $G^{p,q}_U$ are isomorphisms for all $p$, $q\in\mathbb{Z}$.
This lemma is equivalent to say that $\mathcal{P}(X)$ holds.

Now, we check that $\mathcal{P}$ satisfies the three conditions in \cite[Lemma 2.1]{M1}.
Obviously, $\mathcal{P}$ satisfies the disjoint condition.
For open sets $V\subseteq U$, denote by $\iota^U_V:\mathcal{F}^{p,q}(U)\rightarrow \mathcal{F}^{p,q}(V)$ and $j^U_V:\mathcal{G}^{p,q}(U)\rightarrow \mathcal{G}^{p,q}(V)$ the  corresponding restrictions.
Fix an integer $p$.
For open subsets $U$, $V$ in $X$, there is a commutative diagram
\begin{equation}\label{M-V-communication}
\xymatrix{
0\ar[r]&\mathcal{F}^{p,\bullet}(U\cup V)\ar[d]^{g^{p,\bullet}_{U\cup V}}\quad \ar[r]^{(\iota_U^{U\cup V},\iota_V^{U\cup V})^T\quad\quad} & \quad\mathcal{F}^{p,\bullet}(U)\oplus \mathcal{F}^{p,\bullet}(V)\ar[d]^{g^{p,\bullet}_{U}\oplus g^{p,\bullet}_{V}}\quad\ar[r]^{\quad\quad(\iota_{U\cap V}^U,-\iota_{U\cap V}^V)}& \quad\mathcal{F}^{p,\bullet}(U\cap V) \ar[d]^{g^{p,\bullet}_{U\cap V}}\ar[r]& 0\\
0\ar[r]&\mathcal{G}^{p,\bullet}(U\cup V)     \quad  \ar[r]^{(j_U^{U\cup V},j_V^{U\cup V})^T\quad\quad}& \quad\mathcal{G}^{p,\bullet}(U)\oplus \mathcal{G}^{p,\bullet}(V)    \quad\ar[r]^{\quad\quad (j_{U\cap V}^U,-j_{U\cap V}^ V)} & \quad\mathcal{G}^{p,\bullet}(U\cap V)     \ar[r]& 0 }
\end{equation}
 of  complexes.
By the exactness of (\ref{exact}),  the two rows in (\ref{M-V-communication}) are both exact sequences of complexes.
For convenience, set
\begin{displaymath}
I^{p,q}(U):=H^{p,q}(\widetilde{U})\oplus H^{p,q}(Y\cap U), \quad J^{p,q}(U):=H^{p,q}(U)\oplus H^{p,q}(E\cap \widetilde{U}).
\end{displaymath}
Then (\ref{M-V-communication}) induces a commutative diagram
\begin{displaymath}
\tiny{\xymatrix{
    \cdots\ar[r]&I^{p,q-1}(U\cap V)\ar[d]^{G^{p,q-1}_{U\cap V}}\ar[r]&I^{p,q}(U\cup V) \ar[d]^{G^{p,q}_{U\cup V}} \ar[r]& I^{p,q}(U)\oplus I^{p,q}(V) \ar[d]^{G^{p,q}_U\oplus G^{p,q}_V}\ar[r]&  I^{p,q}(U\cap V)\ar[d]^{G^{p,q}_{U\cap V}}\ar[r]&I^{p,q+1}(U\cup V)\ar[d]^{G^{p,q+1}_{U\cup V}}\ar[r]&\cdots\\
 \cdots\ar[r]&J^{p,q-1}(U\cap V)     \ar[r] & J^{p,q}(U\cup V)\ar[r]&J^{p,q}(U)\oplus J^{p,q}(V)       \ar[r]& J^{p,q}(U\cap V)     \ar[r] & J^{p,q+1}(\widetilde{U}\cup \widetilde{V})\ar[r]&\cdots}}
\end{displaymath}
of long exact sequences. If $G^{p,q}_U$, $G^{p,q}_V$ and $G^{p,q}_{U\cap V}$ are isomorphisms for all $p$, $q\in\mathbb{Z}$,
then so are $G^{p,q}_{U\cup V}$ for all $p$, $q\in\mathbb{Z}$ by the five-lemma.
Thus $\mathcal{P}$  satisfies the Mayer-Vietoris condition.
Let $\mathcal{U}$ be a basis of the topology of $X$ such that each $U\in \mathcal{U}$ is Stein.
Then  $Y\cap \bigcap\limits_{i=1}^lU_i$ is empty or Stein for any $U_1$, $\ldots$, $U_l\in\mathcal{U}$.
As we have proved, $G^{p,q}_{U_1\cap\ldots\cap U_l}$ is an isomorphism for every $p$, $q\in\mathbb{Z}$, so $\mathcal{P}$ satisfies the local condition.
By  \cite[Lemma 2.1]{M1}, $\mathcal{P}(X)$ holds, i.e., $G$ is an isomorphism.
\end{proof}

Suppose that $(Y,\pi|_Y)$ is a closed holomorphic Poisson submanifold of a holomorphic Poisson manifold $(X,\pi)$.
In this case, the conormal bundle $N^*_{Y/X}$ has a fiberwise Lie algebra structure given by the Poisson bracket.
Let  $\rho:\widetilde{X}\rightarrow X$ be the blow-up of  $X$ along  $Y$ and let $E$, $r$, $i_Y$ and $i_E$ be the ones defined as above.
We add the following assumption:
\begin{displaymath}
\mbox{($\star$)\qquad\emph{ $N^*_{Y/X,y}$ is an abelian Lie algebra over each $y\in Y$}.}\qquad\qquad\qquad\qquad\qquad\qquad\qquad
\end{displaymath}
By \cite[Propositions 8.2, 8.3]{P} or \cite[Proposition 3.15]{BCv}, there exists a unique holomorphic Poisson structure $\tilde{\pi}$ on $\widetilde{X}$ such that $\tilde{\pi}|_E$ is a holomorphic Poisson structure on $E$ and
$\rho:(\widetilde{X},\tilde{\pi})\rightarrow(X,\pi)$, $i_E:(E,\tilde{\pi}|_E)\rightarrow (\widetilde{X},\tilde{\pi})$,
$\rho|_E:(E,\tilde{\pi}|_E)\rightarrow (Y,\tilde{\pi}|_Y)$ are all morphisms between holomorphic Poisson manifolds.

Under the assumption $(\star)$, we have the following blow-up formulae.
\begin{thm}\label{blow-up-hKB}
Fix an integer $k$. Then
\begin{equation}\label{blow-up-hKB1}
(1)\qquad
\left(
  \begin{array}{cc}
  \rho^* &     i_{E*}  \\
     0   & (\rho|_E)_* \\
  \end{array}
\right)
: H_k(X,\pi)\oplus H_{k-1}(E,\tilde{\pi}|_E)\rightarrow H_{k}(\widetilde{X},\tilde{\pi})\oplus H_{k-r}(Y,\pi|_Y)
\end{equation}
is an  isomorphism, and moreover,
\begin{equation}\label{split-exact1}
\xymatrix{
0\ar[r]^{} &H_{k-1}(E,\tilde{\pi}|_E)\ar[r]^{(i_{E*},\,(\rho|_E)_*)^T\,\,\qquad} &
H_{k}(\widetilde{X},\tilde{\pi})\oplus H_{k-r}(Y,\pi|_Y)\ar[r]^{\quad\qquad (\rho_*,\,-i_{Y*})}&
H_k(X,\pi)\ar[r]^{} & 0
}
\end{equation}
is a split exact sequence, where $(\rho^*,0)^T$ is  a right  inverse of $(\rho_*,\,-i_{Y*})$\emph{;}
\begin{equation}\label{blow-up-hKB2}
(2)\quad
\left(
  \begin{array}{cc}
   \rho_*  &     0        \\
   i_E^*   & -(\rho|_E)^* \\
  \end{array}
\right)
: H_{k}(\widetilde{X},\tilde{\pi})\oplus H_{k-r}(Y,\pi|_Y)\rightarrow H_k(X,\pi)\oplus H_{k-1}(E,\tilde{\pi}|_E)
\end{equation}
is an isomorphism, and moreover,
\begin{equation}\label{split-exact2}
\xymatrix{
0\ar[r]^{} &H_k(X,\pi)\ar[r]^{(\rho^*,\,i_Y^*)^T\quad\qquad} &
H_{k}(\widetilde{X},\tilde{\pi})\oplus H_{k-r}(Y,\pi|_Y)\ar[r]^{\qquad (i_E^*,\,-(\rho|_E)^*)}&
H_{k-1}(E,\tilde{\pi}|_E)\ar[r]^{} & 0
}
\end{equation}
is a split exact sequence, where $(\rho_*,0)$ is  a left  inverse of $(\rho^*,\,i_Y^*)^T$.
\end{thm}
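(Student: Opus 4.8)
The plan is to deduce both isomorphisms from the Dolbeault blow-up formulae of Lemma~\ref{blowup-Dolbeault} by means of the spectral sequence ${}_KE_r^{p,q}(X,\pi)\Rightarrow H_{p+q+n}(X,\pi)$, whose first page is ${}_KE_1^{p,q}=H^{-p,q}$ by (\ref{ss1}) (together with its current analogue ${}_PE_1^{p,q}=H^{-p,q}$), exactly in the manner of the proofs of Theorems~\ref{L-H} and~\ref{K}, and then to read off the two split exact sequences by elementary homological algebra. Throughout put $n=\dim_{\mathbb{C}}X=\dim_{\mathbb{C}}\widetilde{X}$, so that $\dim_{\mathbb{C}}E=n-1$ and $\dim_{\mathbb{C}}Y=n-r$, and recall that under the hypothesis $(\star)$ the maps $\rho$, $i_E$ and $\rho|_E$ are morphisms of holomorphic Poisson manifolds; this is the only place $(\star)$ enters, but it is essential, since it is precisely what produces $\tilde{\pi}$ together with these morphisms.

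For part~(1), I would form the two double complexes
\begin{displaymath}
S^{\bullet,\bullet}:=K^{\bullet,\bullet}(X,\pi)\oplus K^{\bullet,\bullet}(E,\tilde{\pi}|_E)[1,-1] \quad\text{and}\quad T^{\bullet,\bullet}:=P^{\bullet,\bullet}(\widetilde{X},\tilde{\pi})\oplus P^{\bullet,\bullet}(Y,\pi|_Y)[r,-r],
\end{displaymath}
the bidegree shifts being chosen so that $i_{E*}$ (which raises bidegree by $(1,1)$) and $(\rho|_E)_*$ (which lowers it by $(r-1,r-1)$) become bidegree-preserving. Then the map $\Phi:=\left(\begin{smallmatrix}\rho^*&i_{E*}\\ 0&(\rho|_E)_*\end{smallmatrix}\right)\colon S^{\bullet,\bullet}\to T^{\bullet,\bullet}$ is a morphism of double complexes: each entry commutes with $\bar{\partial}$, and commutes with $\partial_{\pi}$ because $\rho$, $i_E$, $\rho|_E$ are Poisson morphisms. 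By (\ref{ss1}) and its current analogue the first pages are ${}_SE_1^{p,q}=H^{-p,q}(X)\oplus H^{-p-1,q-1}(E)$ and ${}_TE_1^{p,q}=H^{-p,q}(\widetilde{X})\oplus H^{-p-r,q-r}(Y)$, and after the substitution $p\mapsto-p$ the induced map $E_1^{p,q}(\Phi)$ is precisely the isomorphism $F$ of Lemma~\ref{blowup-Dolbeault}. Since all the double complexes in play are bounded (being built from $\mathcal{A}_X^{\bullet,\bullet}$- and $\mathcal{D}_X^{\bullet,\bullet}$-type sheaves), an isomorphism on $E_1$-pages forces an isomorphism on the cohomology of the associated simple complexes; and since $sS^{\bullet,\bullet}=K^{\bullet}(X,\pi)\oplus K^{\bullet}(E,\tilde{\pi}|_E)$ and $sT^{\bullet,\bullet}=P^{\bullet}(\widetilde{X},\tilde{\pi})\oplus P^{\bullet}(Y,\pi|_Y)$, equation (\ref{computation1}) identifies $H^{k-n}(\Phi)$ with the map (\ref{blow-up-hKB1}), which is therefore an isomorphism. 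For part~(2) I would run the same argument with $S^{\bullet,\bullet}:=K^{\bullet,\bullet}(\widetilde{X},\tilde{\pi})\oplus K^{\bullet,\bullet}(Y,\pi|_Y)$, $T^{\bullet,\bullet}:=P^{\bullet,\bullet}(X,\pi)\oplus K^{\bullet,\bullet}(E,\tilde{\pi}|_E)$ and $\Psi:=\left(\begin{smallmatrix}\rho_*&0\\ i_E^*&-(\rho|_E)^*\end{smallmatrix}\right)$; no shift is needed here since $\rho_*$, $i_E^*$, $(\rho|_E)^*$ all preserve bidegree, and the reason for mixing currents (on $\widetilde{X}$ and $X$) with forms (on $Y$ and $E$) is that $\rho_*$ is naturally applied to a current whereas $i_E^*$, the restriction to the divisor $E$, is naturally applied to a form. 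Again $E_1^{p,q}(\Psi)$ is, after the substitution $p\mapsto-p$, the isomorphism $G$ of Lemma~\ref{blowup-Dolbeault}, and (\ref{computation1}) identifies $H^{k-n}(\Psi)$ with the map (\ref{blow-up-hKB2}).

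The split exact sequences then follow formally. For (\ref{split-exact1}): $(\rho^*,0)^T$ is a right inverse of $(\rho_*,-i_{Y*})$ because $\rho_*\rho^*=\deg\rho\cdot\mathrm{id}=\mathrm{id}$ by (\ref{projection formula2}); the composite $(\rho_*,-i_{Y*})\circ(i_{E*},(\rho|_E)_*)^T$ vanishes because $\rho\circ i_E=i_Y\circ(\rho|_E)$ yields $\rho_*i_{E*}=i_{Y*}(\rho|_E)_*$; and if $\rho_*\alpha-i_{Y*}\beta=0$, writing $(\alpha,\beta)^T$ as the image of some $(\gamma,\delta)^T$ under the isomorphism (\ref{blow-up-hKB1}) and applying $(\rho_*,-i_{Y*})$ forces $\gamma=0$, so that $(\alpha,\beta)^T$ lies in the image of $(i_{E*},(\rho|_E)_*)^T$; injectivity of the latter and surjectivity of $(\rho_*,-i_{Y*})$ are immediate from (\ref{blow-up-hKB1}) and the right inverse. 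Dually, for (\ref{split-exact2}): $(\rho_*,0)$ is a left inverse of $(\rho^*,i_Y^*)^T$ by (\ref{projection formula2}); the relation $i_E^*\rho^*=(\rho|_E)^*i_Y^*$ makes $(i_E^*,-(\rho|_E)^*)\circ(\rho^*,i_Y^*)^T=0$; surjectivity of $(i_E^*,-(\rho|_E)^*)$ is immediate from (\ref{blow-up-hKB2}); and if $i_E^*\alpha-(\rho|_E)^*\beta=0$, then (\ref{blow-up-hKB2}) sends both $(\alpha,\beta)^T$ and $(\rho^*\xi,i_Y^*\xi)^T$, with $\xi:=\rho_*\alpha$, to the same vector $(\xi,0)^T$, so by injectivity of (\ref{blow-up-hKB2}) we get $(\alpha,\beta)^T=(\rho^*\xi,i_Y^*\xi)^T$, which gives exactness in the middle.

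The step I expect to be the main obstacle is the bookkeeping that makes $\Phi$ and $\Psi$ into honest morphisms of double complexes: pinning down the bidegree shifts and the accompanying sign conventions, checking compatibility with $\partial_{\pi}$ via the Poisson-morphism property, and — above all — verifying that the mixed forms-and-currents entries $i_{E*}$, $(\rho|_E)_*$, $\rho_*$, $i_E^*$ induce on the Dolbeault $E_1$-pages exactly the Gysin and restriction maps occurring in $F$ and $G$, so that Lemma~\ref{blowup-Dolbeault} applies verbatim. Everything else is routine.
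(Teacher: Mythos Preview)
Your proposal is correct and follows essentially the same route as the paper: the same double complexes $S^{\bullet,\bullet}$, $T^{\bullet,\bullet}$ with the same shifts, the same morphisms $\Phi$ and $\Psi$, the appeal to Lemma~\ref{blowup-Dolbeault} at the $E_1$-page, and the same elementary diagram chase (using $\rho_*\rho^*=\mathrm{id}$ and $\rho\circ i_E=i_Y\circ(\rho|_E)$) to extract the split exact sequences. Your bookkeeping of the shifted $E_1$-pages ${}_SE_1^{p,q}=H^{-p,q}(X)\oplus H^{-p-1,q-1}(E)$ is in fact slightly more explicit than the paper's, and your identification of the ``main obstacle'' is exactly the routine verification the paper leaves implicit.
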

\begin{proof}
$(1)$
Consider the double complexes
\begin{displaymath}
S^{\bullet,\bullet}:=K^{\bullet,\bullet}(X,\pi)\oplus K^{\bullet,\bullet}(E,\tilde{\pi}|_E)[1,-1],
\end{displaymath}
\begin{displaymath}
T^{\bullet,\bullet}:=P^{\bullet,\bullet}(\widetilde{X},\tilde{\pi})\oplus P^{\bullet,\bullet}(Y,\pi|_Y)[r,-r].
\end{displaymath}
By (\ref{ss1}), we get the first pages
\begin{displaymath}
_{S}E_1^{p,q}=H^{-p,q}(X)\oplus H^{-p,q}(E) \quad\mbox{ and }\quad _{T}E_1^{p,q}=H^{-p,q}(\widetilde{X})\oplus H^{-p,q}(Y),
\end{displaymath}
of the spectral sequences associated to $S^{\bullet,\bullet}$ and $T^{\bullet,\bullet}$ respectively.
Define $f:S^{\bullet,\bullet}\rightarrow T^{\bullet,\bullet}$ as $(\alpha,\,\beta)\mapsto (\rho^*\alpha+i_{E*}\beta,\,(\rho|_E)_*\beta)$.
It is easy to check that $f$ is a morphism of double complexes.
By  Lemma \ref{blowup-Dolbeault}, $f$ induces an isomorphism  $_SE_1^{\bullet,\bullet}\rightarrow\mbox{ }  _TE_1^{\bullet,\bullet}$  at $E_1$-pages, hence induces an isomorphism $H^{k-n}(sS^{\bullet,\bullet})\tilde{\rightarrow} H^{k-n}(sT^{\bullet,\bullet})$ for any $k\in\mathbb{Z}$, where $n=\textrm{dim}_{\mathbb{C}}X$.
By (\ref{computation1}),
\begin{displaymath}
\begin{aligned}
H^{k-n}(sS^{\bullet,\bullet})=&H^{k-n}(K^{\bullet}(X,\pi))\oplus H^{k-n}(K^{\bullet}(E,\tilde{\pi}|_E))\\
\cong& H_{k}(X,\pi)\oplus H_{k-1}(E,\tilde{\pi}|_E),
\end{aligned}
\end{displaymath}
\begin{displaymath}
\begin{aligned}
H^{k-n}(sT^{\bullet,\bullet})=&H^{k-n}(P^{\bullet}(\widetilde{X},\tilde{\pi}))\oplus H^{k-n}(P^{\bullet}(Y,\pi|_Y))\\
\cong& H_{k}(\widetilde{X},\tilde{\pi})\oplus H_{k-r}(Y,\pi|_Y).
\end{aligned}
\end{displaymath}
Thus, (\ref{blow-up-hKB1}) is an isomorphism.

Suppose that $(i_{E*},\,(\rho|_E)_*)^T(\gamma)=0$ for $\gamma\in H_{k-1}(E,\tilde{\pi}|_E)$. Then
\begin{displaymath}
\left(
  \begin{array}{cc}
  \rho^* &     i_{E*}  \\
     0   & (\rho|_E)_* \\
  \end{array}
\right)
\left(
  \begin{array}{c}
  0  \\
  \gamma \\
  \end{array}
\right)
=
\left(
  \begin{array}{c}
  0  \\
  0 \\
  \end{array}
\right),
\end{displaymath}
which implies that $\gamma=0$ by (\ref{blow-up-hKB1}). So $(i_{E*},\,(\rho|_E)_*)^T$ is injective.
Evidently, $(\rho_*,\,-i_{Y*})\circ(i_{E*},\,(\rho|_E)_*)^T=0$.
Assume that $(\rho_*,\,-i_{Y*})(\alpha,\,\beta)^T=0$ for $\alpha\in H_{k}(\widetilde{X},\tilde{\pi})$ and $\beta\in H_{k-r}(Y,\pi|_Y)$.
By (\ref{blow-up-hKB1}), there exist $\zeta\in  H_k(X,\pi)$ and  $\eta\in H_{k-1}(E,\tilde{\pi}|_E)$
such that $\alpha=\rho^*\zeta+i_{E*}\eta$ and $\beta=(\rho|_E)_*\eta$.
Then $\zeta=\rho_*\alpha-i_{Y*}\beta=0$ by (\ref{projection formula2}), which implies that $(\alpha,\,\beta)^T=(i_{E*},\,(\rho|_E)_*)^T(\eta)$.
Hence $\textrm{im}(i_{E*},\,(\rho|_E)_*)^T=\textrm{ker}(\rho_*,\,-i_{Y*})$.
For any $\theta\in H_k(X,\pi)$, $(\rho_*,\,-i_{Y*})(\rho^*\theta,0)^T=\theta$ by (\ref{projection formula2}).
Thus,  $(\rho_*,\,-i_{Y*})$ is surjective.
We proved that (\ref{split-exact1}) is exact.
By (\ref{projection formula2}), $(\rho_*,\,-i_{Y*})\circ(\rho^*,0)^T=id$, hence (\ref{split-exact1}) is split and $(\rho^*,0)^T$ is  a right  inverse of $(\rho_*,\,-i_{Y*})$.

$(2)$
Consider the morphism
\begin{displaymath}
\begin{aligned}
g:K^{\bullet,\bullet}(\widetilde{X},\tilde{\pi})\oplus K^{\bullet,\bullet}(Y,\pi|_Y)&\rightarrow P^{\bullet,\bullet}(X,\pi)\oplus K^{\bullet,\bullet}(E,\tilde{\pi}|_E)\\
(\alpha,\,\beta)& \mapsto (\rho_*\alpha,\,i_E^*\alpha-(\rho|_E)^*\beta)
\end{aligned}
\end{displaymath}
of double complexes.
By  Lemma \ref{blowup-Dolbeault}, we easily get the isomorphism (\ref{blow-up-hKB2}) with the similar proof of (1).

Suppose that $(\rho^*,\,i_Y^*)^T(\gamma)=0$ for $\gamma\in H_k(X,\pi)$.
Then $\rho^*\gamma=0$.
By (\ref{projection formula2}), $\gamma=\rho_*\rho^*\gamma=0$.
So  $(\rho^*,\,i_Y^*)^T$ is injective.
Clearly, $(i_E^*,\,-(\rho|_E)^*)\circ(\rho^*,\,i_Y^*)^T=0$.
Assume that $(i_E^*,\,-(\rho|_E)^*)(\alpha,\,\beta)^T=0$  for $\alpha\in H_{k}(\widetilde{X},\tilde{\pi})$ and $\beta\in H_{k-r}(Y,\pi|_Y)$.
Set $\gamma:=\rho_*\alpha$.
Then
\begin{displaymath}
\left(
  \begin{array}{cc}
   \rho_*  &     0        \\
   i_E^*   & -(\rho|_E)^* \\
  \end{array}
\right)
\left(
  \begin{array}{c}
   \alpha         \\
   \beta    \\
  \end{array}
\right)
=
\left(
  \begin{array}{c}
   \gamma         \\
   0    \\
  \end{array}
\right)
=
\left(
  \begin{array}{cc}
   \rho_*  &     0        \\
   i_E^*   & -(\rho|_E)^* \\
  \end{array}
\right)
\left(
  \begin{array}{c}
   \rho^*\gamma         \\
   i_Y^*\gamma    \\
  \end{array}
\right)
,
\end{displaymath}
where the first equality use (\ref{projection formula2}).
By (\ref{blow-up-hKB2}), $(\alpha,\,\beta)^T=(\rho^*\gamma,\,i_Y^*\gamma)^T=(\rho^*,\,i_Y^*)^T(\gamma)$.
Thus $\textrm{im}(\rho^*,\,i_Y^*)^T=\textrm{ker}(i_E^*,\,-(\rho|_E)^*)$.
Moreover, $(i_E^*,\,-(\rho|_E)^*)$ is surjective by (\ref{blow-up-hKB2}).
Up to present, we proved that (\ref{split-exact2}) is exact.
By (\ref{projection formula2}),   $(\rho_*,0)\circ(\rho^*,\,i_Y^*)^T=\rho_*\circ\rho^*=id$, so (\ref{split-exact2}) is split and $(\rho_*,0)$ is  a left  inverse of $(\rho^*,\,i_Y^*)^T$.
\end{proof}

\begin{rem}
Using Lemma \ref{blowup-Dolbeault} instead of (\ref{blow-up-hKB1}) and (\ref{blow-up-hKB2}), we can prove the following results through almost the same procedures:
\begin{displaymath}
(1)\quad\qquad
\xymatrix{
0\ar[r]^{} &H^{p-1,q-1}(E)\ar[r]^{(i_{E*},\,(\rho|_E)_*)^T\,\,\qquad} &
H^{p,q}(\widetilde{X})\oplus H^{p-r,q-r}(Y)\ar[r]^{\quad\qquad (\rho_*,\,-i_{Y*})}&
H^{p,q}(X)\ar[r]^{} & 0
}
\end{displaymath}
is a split exact sequence, where $(\rho^*,0)^T$ is  a right  inverse of $(\rho_*,\,-i_{Y*})$;
\begin{displaymath}
(2)\quad\qquad\qquad\qquad
\xymatrix{
0\ar[r]^{} &H^{p,q}(X)\ar[r]^{(\rho^*,\,i_Y^*)^T\qquad} &
H^{p,q}(\widetilde{X})\oplus H^{p,q}(Y)\ar[r]^{\qquad (i_E^*,\,-(\rho|_E)^*)}&
H^{p,q}(E)\ar[r]^{} & 0
}
\end{displaymath}
is a split exact sequence, where $(\rho_*,0)$ is  a left  inverse of $(\rho^*,\,i_Y^*)^T$.
\end{rem}

\begin{cor}\label{blowup-weak}
For any $k\in\mathbb{Z}$,

$(1)$
$(\rho|_E)_*:H_{k-1}(E,\tilde{\pi}|_E)\rightarrow H_{k-r}(Y,\pi|_Y)$ is surjective,

$(2)$
$(\rho|_E)^*:H_{k-r}(Y,\pi|_Y)\rightarrow H_{k-1}(E,\tilde{\pi}|_E)$ is injective,

$(3)$
$(\rho^*,\,i_{E*}): H_k(X,\pi)\oplus H_{k-1}(E,\tilde{\pi}|_E)\rightarrow H_{k}(\widetilde{X},\tilde{\pi})$ is surjective,

$(4)$
$(\rho_*,\,i_E^*)^T:H_{k}(\widetilde{X},\tilde{\pi})\rightarrow H_k(X,\pi)\oplus H_{k-1}(E,\tilde{\pi}|_E)$
is injective,

$(5)$
$(\rho_*,\,i_E^*)^T:H_{k}(\widetilde{X},\tilde{\pi})\rightarrow H_k(X,\pi)\oplus [H_{k-1}(E,\tilde{\pi}|_E)/(\rho|_E)^*H_{k-r}(Y,\pi|_Y)]$
is an isomorphism.
\end{cor}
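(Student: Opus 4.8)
The plan is to obtain all five assertions as formal consequences of Theorem~\ref{blow-up-hKB}, with no new geometric input: the two block isomorphisms (\ref{blow-up-hKB1}) and (\ref{blow-up-hKB2}) are the only ingredients, and the split exact sequences (\ref{split-exact1}), (\ref{split-exact2}) give equivalent routes to several parts. The proof therefore reduces to elementary linear algebra combined with careful index bookkeeping, which is the only point requiring attention.

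For $(1)$ and $(3)$ I would post-compose the isomorphism (\ref{blow-up-hKB1}) with the two coordinate projections of its target $H_{k}(\widetilde{X},\tilde{\pi})\oplus H_{k-r}(Y,\pi|_Y)$. Projection onto the second summand yields a surjection $H_{k}(X,\pi)\oplus H_{k-1}(E,\tilde{\pi}|_E)\to H_{k-r}(Y,\pi|_Y)$ sending $(\alpha,\beta)^T$ to $(\rho|_E)_*\beta$; since this surjection factors through $(\rho|_E)_*$, assertion $(1)$ follows. Projection onto the first summand yields a surjection $H_{k}(X,\pi)\oplus H_{k-1}(E,\tilde{\pi}|_E)\to H_{k}(\widetilde{X},\tilde{\pi})$ sending $(\alpha,\beta)^T$ to $\rho^*\alpha+i_{E*}\beta=(\rho^*,\,i_{E*})(\alpha,\beta)^T$, which is exactly $(3)$.

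Dually, for $(2)$ and $(4)$ I would pre-compose the isomorphism (\ref{blow-up-hKB2}) with the two coordinate inclusions of its source. Restricting to $0\oplus H_{k-r}(Y,\pi|_Y)$ gives $\beta\mapsto(0,\,-(\rho|_E)^*\beta)^T$, and injectivity of (\ref{blow-up-hKB2}) forces $(\rho|_E)^*$ to be injective, which is $(2)$. Restricting to $H_{k}(\widetilde{X},\tilde{\pi})\oplus 0$ gives the composite $H_{k}(\widetilde{X},\tilde{\pi})\hookrightarrow H_{k}(\widetilde{X},\tilde{\pi})\oplus H_{k-r}(Y,\pi|_Y)\xrightarrow{(\ref{blow-up-hKB2})}H_k(X,\pi)\oplus H_{k-1}(E,\tilde{\pi}|_E)$, namely $\alpha\mapsto(\rho_*\alpha,\,i_E^*\alpha)^T=(\rho_*,\,i_E^*)^T(\alpha)$; being an injection followed by an isomorphism, it is injective, which is $(4)$.

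Finally, $(5)$ refines $(4)$: I would check that $(\rho_*,\,i_E^*)^T$ induces an isomorphism onto $H_k(X,\pi)\oplus\bigl[H_{k-1}(E,\tilde{\pi}|_E)/(\rho|_E)^*H_{k-r}(Y,\pi|_Y)\bigr]$. For injectivity, if $\rho_*\alpha=0$ and $i_E^*\alpha=(\rho|_E)^*\beta$ for some $\beta\in H_{k-r}(Y,\pi|_Y)$, then $(\alpha,\beta)^T$ lies in the kernel of (\ref{blow-up-hKB2}), so $\alpha=0$. For surjectivity, given $\eta\in H_k(X,\pi)$ and a class represented by $\omega\in H_{k-1}(E,\tilde{\pi}|_E)$, surjectivity of (\ref{blow-up-hKB2}) produces $(\alpha,\beta)^T$ with $\rho_*\alpha=\eta$ and $i_E^*\alpha-(\rho|_E)^*\beta=\omega$, whence $i_E^*\alpha\equiv\omega$ in the quotient and $\alpha$ maps to the prescribed pair. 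I do not expect any genuine obstacle here; the one place demanding care is matching the arrows of Theorem~\ref{blow-up-hKB} with the coordinate (co)projections and keeping the degree shifts straight, so I would spell out each of the five reductions rather than appeal to a general statement about block–triangular isomorphisms.
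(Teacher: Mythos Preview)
Your proposal is correct and matches the paper's own proof: both derive all five statements as purely formal linear-algebra consequences of the two block-triangular isomorphisms (\ref{blow-up-hKB1}) and (\ref{blow-up-hKB2}) in Theorem~\ref{blow-up-hKB}, by reading off what happens on the coordinate summands. The only cosmetic difference is that for $(5)$ the paper observes in one line that (\ref{blow-up-hKB2}) carries $0\oplus H_{k-r}(Y,\pi|_Y)$ onto $0\oplus(\rho|_E)^*H_{k-r}(Y,\pi|_Y)$ and passes to the quotient, whereas you verify injectivity and surjectivity of the induced map by hand; these are equivalent arguments.
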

\begin{proof}
Immediately, Theorem \ref{blow-up-hKB} (1) implies  $(3)$ and Theorem \ref{blow-up-hKB} (2) implies  $(4)$.

Let $\gamma$ be any element in $H_{k-r}(Y,\pi|_Y)$.
By Theorem \ref{blow-up-hKB} (1), there exist $\alpha\in H_k(X,\pi)$ and $\beta\in H_{k-1}(E,\tilde{\pi}|_E)$ such that
\begin{displaymath}
\left(
  \begin{array}{c}
       0        \\
   \gamma     \\
  \end{array}
\right)
=\left(
  \begin{array}{cc}
   \rho_*  &     i_{E*}        \\
   0   & (\rho|_E)_* \\
  \end{array}
\right)
\left(
  \begin{array}{c}
       \alpha        \\
   \beta \\
  \end{array}
\right),
\end{displaymath}
which implies that $\gamma=(\rho|_E)_*\beta$.
Thus, $(\rho|_E)_*$ is surjective, i.e., (1) follows.

Suppose that $(\rho|_E)^*(\beta)=0$ for  $\beta\in H_{k-r}(Y,\pi|_Y)$.
Then
\begin{displaymath}
\left(
  \begin{array}{cc}
   \rho_*  &     0        \\
   i_E^*   & -(\rho|_E)^* \\
  \end{array}
\right)
\left(
  \begin{array}{c}
       0        \\
   \beta \\
  \end{array}
\right)
=\left(
  \begin{array}{c}
       0        \\
   0     \\
  \end{array}
\right),
\end{displaymath}
By Theorem \ref{blow-up-hKB} (2), $\beta=0$. We proved $(2)$.


Notice that
\begin{displaymath}
\left(
  \begin{array}{cc}
   \rho_*  &     0        \\
   i_E^*   & -(\rho|_E)^* \\
  \end{array}
\right)
(0\oplus H_{k-r}(Y,\pi|_Y))= 0\oplus (\rho|_E)^*H_{k-r}(Y,\pi|_Y),
\end{displaymath}
which implies $(5)$ by Theorem \ref{blow-up-hKB} (2).
\end{proof}

\begin{rem}
For compact cases,  X. Chen, Y. Chen, S. Yang and X. Yang \cite[Theorem 1.1]{CCYY} first proved Corollary \ref{blowup-weak} (5)  by the relative Koszul-Brylinski homology and the finiteness of dimensions of holomorphic Koszul-Brylinski homologies.
\end{rem}

\begin{cor}
Let  $\rho:\widetilde{X}\rightarrow X$ be the blow-up of  $X$ along a single point set $\{x_0\}$.
Assume that $T^*_{X,x_0}$ is an abelian Lie algebra.
Then
\begin{displaymath}
H_{k}(\widetilde{X},\tilde{\pi})\cong\left\{
 \begin{array}{ll}
H_k(X,\pi)\oplus \mathbb{C}^{n-1},&~k= n\\
 &\\
 H_k(X,\pi),&~\emph{others},
 \end{array}
 \right.
\end{displaymath}
where $n=\emph{dim}_{\mathbb{C}}X$.
\end{cor}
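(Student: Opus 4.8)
The plan is to specialize Theorem~\ref{blow-up-hKB} (equivalently Corollary~\ref{blowup-weak}) to the case $Y=\{x_0\}$ and feed in the homology of flag manifolds computed in \eqref{flagmanifold}. First I would record the geometric data: since $Y$ is a single point, $r=\mathrm{codim}_{\mathbb C}Y=n$, the normal bundle $N_{Y/X}$ is the tangent space $T_{X,x_0}$, and the exceptional divisor $E=\mathbb P(N_{Y/X})$ is the projective space $\mathbb P(T_{X,x_0})\cong\mathbb P^{\,n-1}$. The conormal bundle $N^*_{Y/X,y}$ over the only point $y=x_0$ is exactly $T^*_{X,x_0}$, so the standing hypothesis is precisely assumption $(\star)$; hence the Poisson structure $\tilde\pi$ on $\widetilde X$ and the Poisson morphisms $\rho$, $i_E$, $\rho|_E$ exist, and Theorem~\ref{blow-up-hKB} and Corollary~\ref{blowup-weak} apply. (If $n=1$ the blow-up is a biholomorphism and $\mathbb C^{\,n-1}=0$, so the statement is trivial; I therefore assume $n\geq 2$, which also gives the required $r\geq 2$.)

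Next I would compute the two auxiliary homologies. The point $Y$ is Stein, and $\Omega_Y^{\bullet}$ reduces to $\mathcal O_Y=\mathbb C$ concentrated in degree $0$, so $H_0(Y,\pi|_Y)=\mathbb C$ and $H_j(Y,\pi|_Y)=0$ for $j\neq 0$; in particular $H_{k-r}(Y,\pi|_Y)=H_{k-n}(Y,\pi|_Y)$ equals $\mathbb C$ when $k=n$ and $0$ when $k\neq n$. On the other hand $E\cong\mathbb P^{\,n-1}$ is a flag manifold of complex dimension $n-1$ with $b(\mathbb P^{\,n-1})=n$, so \eqref{flagmanifold} gives $H_j(E,\tilde\pi|_E)=\mathbb C^{\,n}$ for $j=n-1$ and $0$ otherwise; hence $H_{k-1}(E,\tilde\pi|_E)$ equals $\mathbb C^{\,n}$ when $k=n$ and $0$ when $k\neq n$.

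Finally I would invoke Corollary~\ref{blowup-weak}(5), which provides an isomorphism
\[
H_{k}(\widetilde X,\tilde\pi)\;\cong\;H_k(X,\pi)\oplus\bigl[H_{k-1}(E,\tilde\pi|_E)\big/(\rho|_E)^*H_{k-n}(Y,\pi|_Y)\bigr]
\]
for every $k\in\mathbb Z$. For $k\neq n$ the bracketed quotient vanishes by the previous paragraph, so $H_k(\widetilde X,\tilde\pi)\cong H_k(X,\pi)$. For $k=n$ the bracket is $\mathbb C^{\,n}\big/(\rho|_E)^*\mathbb C$, and since $(\rho|_E)^*\colon H_{0}(Y,\pi|_Y)\to H_{n-1}(E,\tilde\pi|_E)$ is injective by Corollary~\ref{blowup-weak}(2), this quotient is $(n-1)$-dimensional; thus $H_n(\widetilde X,\tilde\pi)\cong H_n(X,\pi)\oplus\mathbb C^{\,n-1}$. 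Combining the two cases yields the asserted formula. There is no analytic obstacle here: the only steps needing a word of care are verifying that $E\cong\mathbb P^{\,n-1}$ counts as a flag manifold so that \eqref{flagmanifold} applies, that the point $Y$ falls under the Stein case of Section~3.1, and that $(\star)$ reduces to the hypothesis on $T^*_{X,x_0}$; everything else is bookkeeping. (Alternatively, one could run the split exact sequence \eqref{split-exact1} of Theorem~\ref{blow-up-hKB}(1) directly and cancel the common $\mathbb C=H_0(Y,\pi|_Y)$ summand, but Corollary~\ref{blowup-weak}(5) packages exactly this cancellation.)
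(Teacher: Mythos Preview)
Your proof is correct and follows essentially the same route as the paper: identify $E\cong\mathbb P^{\,n-1}$ and $Y=\{x_0\}$, compute $H_{k-1}(E,\tilde\pi|_E)$ via \eqref{flagmanifold} and $H_{k-n}(Y,\pi|_Y)$ directly, then combine Corollary~\ref{blowup-weak}(2) (injectivity of $(\rho|_E)^*$) with Corollary~\ref{blowup-weak}(5). The only cosmetic difference is that the paper cites \eqref{zero2} for the point homology whereas you invoke the Stein computation, and you add the harmless remark about $n=1$.
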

\begin{proof}
Clearly, $\pi|_{\{x_0\}}=0$, $E=\mathbb{C}P^{n-1}$ and  $N_{\{x_0\}/X,x_0}=T_{X,x_0}$.
By (\ref{zero2}) and (\ref{flagmanifold}), we have
\begin{displaymath}
H_{k-n}(\{x_0\},0)=\left\{
 \begin{array}{ll}
\mathbb{C},&~k=n\\
 &\\
 0,&~\textrm{others}
 \end{array}
 \right.
\textrm{and}\quad
H_{k-1}(E,\tilde{\pi}|_E)=\left\{
 \begin{array}{ll}
\mathbb{C}^n,&~k=n\\
 &\\
 0,&~\textrm{others}.
 \end{array}
 \right.
\end{displaymath}
By  Corollary \ref{blowup-weak}  (2), $(\rho|_E)^*(H_{k-n}(\{x_0\},0))$ is a one-dimensional subspace of $H_{k-1}(E,\tilde{\pi}|_E)$.
Thus, the corollary follows by  Corollary \ref{blowup-weak} (5).
\end{proof}

\subsection*{Acknowledgements}
 The author would like to thank Prof. Xiangdong Yang for his useful discussion.
 The author is supported by the National Natural Science Foundation of China (Grant No. 12001500, 12071444), the Scientific and Technological Innovation Programs of Higher Education Institutions in Shanxi (Grant No. 2020L0290) and the Fundamental Research Program of Shanxi Province (Grant No. 201901D111141).

\subsection*{Data Availability}
 This article has no associated data.


\end{document}